\newtheorem{theorem}{Theorem}[section]
\newtheorem{definition}[theorem]{Definition}
\newtheorem{lemma}[theorem]{Lemma}
\newtheorem{corollary}[theorem]{Corollary}
\newtheorem{remark}[theorem]{Remark}
\newtheorem{example}[theorem]{Example}
\newenvironment{conj1}{\textsc{Conjecture} 1.}{}
\newenvironment{conj2}{\textsc{Conjecture} 2.}{}
\newenvironment{conj2bir}{\textsc{Conjecture} 2b.}{}
\newcommand{\C}{\mathbb{C}}
\newcommand{\B}{\mathbf{B}}
\newcommand{\R}{\mathbb{R}}
\newcommand{\N}{\mathbb{N}}
\newcommand{\Q}{\mathbb{Q}}
\newcommand{\D}{\Delta}
\newcommand{\la}{\lambda}
\newcommand{\supp}{\mathrm{Supp}}
\newcommand{\cosupp}{\mathrm{Cosupp}}
\newcommand{\exc}{\mathrm{Exc}}
\newcommand{\G}{\Gamma}
\newcommand{\Nklt}{\mathrm{Nklt}}
\newcommand{\Dc}{\widehat{\D}}
\newcommand{\ord}{\mathrm{ord}}
\newcommand{\Div}{\mathrm{Div}}
\renewcommand{\dim}{\mathrm{dim}}
\newcommand{\codim}{\mathrm{codim}}
\newcommand{\Null}{\mathrm{Null}}
\newcommand{\NSNC}{\mathrm{NSNC}}
\newcommand{\Sing}{\mathrm{Sing}}
\newcommand{\CLC}{\mathrm{CLC}}
\newcommand{\Ndlt}{\mathrm{Ndlt}}
\newcommand{\A}{\mathbf{A}}
\newcommand{\bL}{\mathbf{L}}
\begin{document}
\setlength{\parindent}{0pt}

\title[On the semiampleness of the positive part of CKM Zariski dec.]{On the semiampleness of the positive part of CKM Zariski decompositions}
\thanks{2000 {\it Mathematics Subject Classification.}
14C20}
\thanks{Key words: Zariski decomposition, log canonical, semiample.}
\date{\today}

\author{Salvatore Cacciola}

\address{Dipartimento di Matematica e Fisica, Universit\`a di Roma Tre, Largo San Leonardo Murialdo, 1, 00146, Roma, Italy}
\email{cacciola@mat.uniroma3.it}

\begin{abstract}
We study graded rings associated to big divisors on LC pairs whose difference with the log-canonical
divisor is nef.\\
For divisors that are positive enough at the LC centers of the pair, we prove the finite generation of such rings if the pair is DLT or the dimension is low, given that a Zariski decomposition exists.

\end{abstract}
\maketitle

\section{Introduction}
Given a normal complex projective variety $X$,
the graded ring associated to a Cartier divisor $D$ on $X$ is
$$R(X,D):=\bigoplus_{m \in \N} H^0(X,\mathcal{O}_X(mD)).$$
Also, we can naturally extend this definition to $\Q$-Cartier divisors, by considering integral parts.

This ring might be not finitely generated as a $\C$-algebra as soon as the dimension of $X$ is at least $2$.
On the other hand the finite generation of the (log)-canonical ring $R(X,K_X+\D)$ is one of the main conjectures of the Minimal Model Program (see for example \cite{Mor87} and \cite{Kaw09})
and the proof of this result for KLT pairs is one of the most important results of the outstanding paper \cite{BCHM10}.

We are interested in studying the finite generation of graded rings associated to big divisors whose difference with the (log)-canonical divisor is nef.
By \cite{BCHM10} we easily get the following:
\begin{theorem}\label{KLT-basic}
Let $X$ be a normal projective variety and let $\D$ be an effective Weil $\Q$-divisor on $X$ such that  $(X,\D)$ is a Kawamata log terminal (KLT) pair.
If $D$ is a $\Q$-Cartier divisor on $X$ such that
\begin{description}
\item[A] $D$ is big;
\item[B]$aD-(K_X+\D)$ is nef for some rational number $a\geq 0$;

\end{description}
 then $R(X,D)$ is finitely generated.
\end{theorem}


In this paper we want to generalize this result to the case when the pair $(X,\D)$ is log canonical (LC).
Note that, in general, LC pairs are much more difficult to treat than KLT pairs. One typical reason is that we have much less freedom to perturb the boundary divisor $\D$ without worsen the singularities of the pair.

In particular Theorem \ref {KLT-basic} is no longer true in general for log canonical pairs, as shown in section \ref{examples}, so that we need to add an hypothesis
that ensures that $D$ is positive enough at the LC centers of $(X,\D)$, the subvarieties of $X$ where this pair fails to be KLT.

More precisely we formulate the following conjecture:

\vspace{2mm}

\begin{conj1}
\begin{em}
Let $(X,\D)$ be an LC pair, with $\D$ effective.

If $D$ is a $\Q$-Cartier divisor on $X$ which satisfies \textbf{\emph{A}} and \textbf{\emph{B}}
and the augmented base locus $\B_+(D)$ (see Definition \ref{augmented_base_locus}) does not contain any LC center of the pair $(X,\D)$, then the graded ring $R(X,D)$ is finitely generated.
\footnote{Note that the same conjecture was formulated and proved independently by C. Birkar and Z. Hu (see \cite[Corollary 1.3]{BH13}).}
\end{em}
\end{conj1}

\vspace{2mm}

Note that, as shown in Theorem \ref{DLT_BCHM}, in the case when $(X,\D)$ is DLT Conjecture 1 again follows from \cite{BCHM10}.

In Section 3 we easily prove the following:
\begin{theorem}[see Theorem \ref{antonella}]
Conjecture 1 holds in dimension $n$ if we assume the existence of minimal models for $\Q$-factorial DLT pairs of log-general type of dimension $n$
and the abundance conjecture for semi log canonical pairs of dimension $n-1$.
\end{theorem}

As a corollary we get that Conjecture 1 holds if $\dim\;X\leq 4$ (see Corollary \ref{dim4}).

A very useful tool, that can be used when trying to investigate the finite generation of graded rings $R(X,D)$, is the Zariski decomposition.

We say that a $\Q$-Cartier divisor $D$ on $X$ admits a $\Q$-Zariski decomposition
in the sense of Cutkosky-Kawamata-Moriwaki (or a $\Q$-CKM Zariski decomposition) $D=P+N$ if
\begin{itemize}
\item $P$ and $N$ are $\Q$-Cartier divisors;
\item $P$ is nef and $N$ is effective;
\item There exists an integer $k>0$ such that $kD$ and $kP$ are integral divisors and
for every $m \in \N$ the natural map
$$H^0(X,\mathcal{O}_X(kmP))\rightarrow H^0(X,\mathcal{O}_X(kmD))$$ is bijective.
\end{itemize}

Every pseudoeffective divisor on a smooth projective surface admits a Zariski decomposition (see \cite{Fuj79}).
On the other hand
 in higher dimension there exist big divisors such that no birational pullbacks
admit a Zariski decomposition
even if we allow $P$ and $N$ to be $\R$-divisors (see\cite{Nak04}).

If, up to birational modifications, there exists a $\Q$-CKM Zariski decomposition of $D$ and  the nef part $P$ of such a decomposition is semiample, then the graded ring $R(X,D)$ is finitely generated.

Hence a classical approach to prove the finite generation of $R(X,D)$ (see for example \cite{Kaw87}) is to split the proof in the following two steps:
\begin{enumerate}
\item There exists a birational morphism $f:Z\to X$ such that $f^*(D)=P+N$ is a $\Q$-CKM Zariski decomposition;
\item $P$ is semiample.
\end{enumerate}


In this paper, from Section 4 onwards, we try to prove that the divisors we are considering have a finitely generated graded ring, given that condition (i) holds.

Moreover, in this case, we can lighten the hypothesis on the $\B_+$
by using the notion of logbig divisors, introduced by Miles Reid.

A big $\Q$-Cartier divisor is logbig for an LC pair if its restriction to every LC center of the pair is still big (see definition \ref{logbig_def}).

Note that if $D=P+N$ is a Zariski decomposition, a sufficient condition for $P$, the positive part of the decomposition, to be logbig for the pair $(X,\D)$ is that the augmented base locus $\B_+(D)$ does not contain any LC center of the pair.


Hence, up to the existence of a Zariski decomposition, the following conjecture generalizes Conjecture 1:

\vspace{2mm}

\begin{conj2}
Let $(X,\D)$ be an LC pair, with $\D$ effective.

Suppose that $D$ is a $\Q$-Cartier divisor on $X$ satisfying \textbf{\emph{A}} and \textbf{\emph{B}},
there exists a $\Q$-CKM Zariski decomposition $D=P+N$
and $P$ is logbig for the pair $(X,\D)$. Then $P$ is semiample, so that $R(X,D)$ is finitely generated.
\end{conj2}

\vspace{2mm}

In the case $N=0$, a very similar result was stated by Miles Reid in \cite{Rei93}
and was proved by Florin Ambro in the more general setting of quasi-log varieties in \cite[Theorem 7.2]{Amb01} (see also \cite[Theorem 4.4]{Fuj09}).
Moreover, when $X$ is smooth and $\D$ and $N$ are divisors with simple normal crossing support, Conjecture 2 is true and follows from \cite[Theorem 5.1]{Fuj12}.
In fact the saturation condition discussed by Fujino corresponds to the properties of the positive part of the Zariski decomposition.
By using Fujino's theorem, in Section 4 we prove the following:

\begin{theorem}[see Theorem \ref{DLT_logbig}]
Conjecture 2 holds if $(X,\D)$ is a DLT pair.
\end{theorem}

\vspace{2mm}







On the other hand, note that for a divisor $D$ the existence of a Zariski decomposition  is a very strong property in general,
while it is more likely that a birational pullback of $D$ admits one.




Hence it is natural to generalize Conjecture 2 as follows (the ``b" stands for ``birational"):

\vspace{2mm}

\begin{conj2bir}
\begin{em}
Let $(X,\D)$ be an LC pair, with $\D$ effective.

Let $D$ be a $\Q$-Cartier divisor on $X$ which satisfies \textbf{\emph{A}}and \textbf{\emph{B}}.
Suppose there exists a birational morphism $f:Z\to X$ and a $\Q$-divisor $\D_Z$ on $Z$ such that
$K_Z+\D_Z=f^*(K_X+\D)$, $f^*(D)=P+N$ is a $\Q$-CKM Zariski decomposition and $P$ is logbig for the pair $(Z,\D_Z)$.

Then $P$ is semiample, so that $R(X,D)$ is finitely generated.
\end{em}
\end{conj2bir}



\vspace{2mm}



\vspace{2mm}

In section 5 we give sufficient conditions on the Zariski-decomposed divisor and on the geometry of the LC centers of the pair in order
to have the semiampleness of the positive part (see  \ref{overDLT} and \ref{Nklt2}). As a corollary we prove the following:

\begin{theorem}[see Corollary \ref{dim3}]
Conjecture 2b holds if $\dim\; X\leq 3$.
\end{theorem}

Note that in some of our results we can replace conditions \textbf{A} and \textbf{B} with the more usual hypothesis
\begin{itemize}
\item $aD-(K_X+\D)$ big and nef for some $a \in \Q^+$;
\end{itemize}

as shown in section 7.

\subsection*{Acknowledgments} I am extremely grateful to my supervisor, Prof. Angelo Felice Lopez,
for his patient guidance and his constant encouragement.
I am also indebted to Prof. Paolo Cascini for suggesting Theorem \ref{allianz} and to Prof. S\'ebastien Boucksom for many valuable comments about a previous draft of the paper.
Moreover I would like to thank Dr. Lorenzo Di Biagio for many helpful discussions and the anonymous referee for his/her precious suggestions.

\section{Preliminaries}

\subsection{Notation and conventions}
We will always work over the field of complex numbers $\mathbb{C}$.
Given a (complex) normal projective variety $X$, we denote by $\Div(X)$ the set of Cartier divisors on $X$
and by $\Div_\Q(X)$ the set of divisors such that an integral multiple is Cartier.

If $\mu:Y \to X$ is a birational morphism, we denote by $\exc(\mu)$ the
exceptional locus of $\mu$, that is the complement of the biggest open subset of $Y$ on which $\mu$ is an isomorphism.

A \textit{pair} $(X,\D)$ consists of a normal projective variety $X$ and a Weil $\mathbb{Q}$-divisor $\D$ on $X$ such that
$K_X+\D \in \Div_\Q(X)$.

We say that a subvariety $V\subseteq X$ is a \textit{log canonical center} or a \textit{LC center} of the pair $(X,\D)$ if
it is the image, through a  proper birational morphism, of a divisor $E$ over $X$ such that the discrepancy $a(E,X,\D)\leq -1$.

We define $\CLC(X,\D)$:=\{LC centers of the pair $(X,\D)$\}.

Moreover the \emph{non-klt locus} of the pair $(X,\D)$ is
$$\Nklt(X,\D):=\bigcup_{V\in \CLC(X,\D)} V  $$


$ $

We refer to \cite{KM00} for the standard definitions about singularities of pairs that we do not give explicitly.







\begin{definition}\begin{em}
Let $X$ be a normal variety and let $D$ be a Weil $\R$-divisor on $X$.
If we write $D=\sum d_i D_i$, where the $D_i$ are distinct prime divisors, we define

$$D^{\geq1}:=\sum_{d_i\geq 1} d_i D_i,\quad \qquad D^{=1}:=\sum_{d_i=1} D_i.$$
\end{em}\end{definition}

\subsection{Augmented base locus}

\begin{definition}\begin{em}\label{augmented_base_locus}(cf. \cite[Def.\ 1.2]{ELMNP06}).
Let $X$ be a normal projective variety, let $D\in \Div_\Q(X)$. The \textit{augmented base locus} of $D$ is
$$\B_+(D):= \bigcap_{\substack{E\in \Div_\Q(X), E \geq 0\\ D-E \text{ ample }}} \supp(E),$$
if $D$ is big; otherwise $\B_+(D):=X$ by convention.
\end{em}\end{definition}

\begin{definition}\begin{em}(cf. \cite[Def.\ 10.3.4]{Laz04}).
Let $X$ be a normal projective variety and take $L\in \Div_{\Q}(X)$ big and nef.
The \emph{null locus} $\Null(L)\subseteq X$ of $L$ is the union of all positive dimensional subvarieties $V\subseteq X$ such that
$$(L^{\dim \; V} \cdot V)=0.$$
This is a proper algebraic subset of $X$ by \cite[Lemma 10.3.6]{Laz04}.
\end{em}\end{definition}



We will use the following lemma:

\begin{lemma}\label{B+divisori}
Let $X$ be a normal projective variety and let $P \in \Div_\Q(X)$ be big and nef.

If $E$ is a prime divisor on $X$ such that $P_{|_E}$ is big,
then $\B_+(P)\not\supseteq E$.
\end{lemma}

\begin{proof}
Let $\mu:X'\rightarrow X$ be a resolution of singularities of $X$ and let $\widetilde{E}$ be the strict transform of $E$
through $\mu$.
Then
$$(\mu^*(P)^{n-1}\cdot \widetilde{E})=(P^{n-1}\cdot E)=(P_{|_E})^{n-1}>0,$$
because $P_{|_E}$ is big and nef.

Hence, by \cite[10.3.6]{Laz04}, $\widetilde{E}$ is not an irreducible component of $\Null(\mu^*(P))$.

But $\dim\; \widetilde{E}=\dim\; X'-1\geq \dim\; \Null(\mu^*(P))$, so that $\widetilde{E}$ cannot be strictly contained
in an irreducible component of $\Null(\mu^*(P))$.
Thus $\widetilde{E}\not\subseteq \Null(\mu^*(P))$.

Thanks to Nakamaye's theorem (see \cite[Theorem 10.3.5]{Laz04}) this implies that $\widetilde{E}\not\subseteq \B_+(\mu^*(P))$.

Then, by \cite[Proposition 2.3]{BBP13}, $\widetilde{E}\not\subseteq \mu^{-1}(\B_+(P))$,
so that $E=\mu(\widetilde{E})\not\subseteq \B_+(P)$.
\end{proof}



\begin{lemma}\label{giocoB_+}
Let $(X,\D)$ be an LC pair.
Let $L\in \Div_\Q(X)$ be big and such that $\B_+(L)$ does not contain any LC center of the pair $(X,\D)$.

Then there exist an effective Cartier divisor $\Gamma$ on $X$, not containing any LC center of $(X,\D)$ in its support,
and two positive rational numbers $\la,\mu$, such that
$Bs(|\Gamma|)=\B_+(L)$ and
\begin{enumerate}
\item $L-\mu \Gamma$ is ample;
\item $(X,\D+\la \Gamma)$ is an LC pair;
\item $\CLC(X,\D+\la\Gamma)=\CLC(X,\D)$.
\item $(X,\D+\la\Gamma)$ is DLT if $(X,\D)$ is such.
\end{enumerate}

Moreover $\G$ can be chosen generically in its linear series and, if $L$ is nef, we can choose $\mu=\lambda$.
\end{lemma}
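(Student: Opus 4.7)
The plan is to construct $\G$ as a general member of a suitable linear system cutting out $\B_+(L)$. Since $L$ is big and nef, a standard Noetherian argument on the decreasing intersection defining $\B_+(L)$ gives an ample $\Q$-Cartier divisor $A$ and a rational $\epsilon > 0$ such that $L - \epsilon A$ is big and $\B(L - \epsilon A) = \B_+(L)$. I then choose $m \in \N$ sufficiently divisible so that $m(L - \epsilon A)$ is Cartier and $Bs(|m(L - \epsilon A)|) = \B_+(L)$, and take $\G$ to be a general element of this linear system. Since the LC centers of $(X, \D)$ form an at most countable collection of proper closed subvarieties, each not contained in $Bs(|m(L-\epsilon A)|)$, a general $\G$ will contain no LC center of $(X, \D)$ in its support.

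For item $(1)$, the linear equivalence $L - \la \G \sim_\Q (1 - \la m) L + \la m \epsilon A$ shows that $L - \la\G$ is ample whenever $\la m < 1$, as it is a positive combination of a nef divisor and an ample divisor. For items $(2)$ and $(3)$, I take a log resolution $h \colon Y \to X$ of $(X, \D + \G)$ and write
\[
K_Y = h^*(K_X + \D) + \sum_i a_i E_i, \qquad h^* \G = \sum_i b_i E_i,
\]
so the discrepancy of $(X, \D + \la\G)$ along $E_i$ equals $a_i - \la b_i$. For any $E_i$ with $a_i = -1$, the image $h(E_i)$ is an LC center of $(X, \D)$, and by the genericity of $\G$ we have $h(E_i) \not\subseteq \supp(\G)$, which forces $b_i = 0$. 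Setting
\[
\la_0 := \min\left\{\frac{a_i + 1}{b_i} \;:\; a_i > -1,\, b_i > 0\right\} > 0,
\]
the pair $(X, \D + \la \G)$ is LC for every $\la \in (0, \la_0) \cap \Q$, and the divisors on $Y$ with discrepancy exactly $-1$ coincide for $(X, \D)$ and $(X, \D + \la \G)$. Since the LC centers of an LC pair can be read off as irreducible components of intersections of images of LC-place divisors on any single log resolution, this yields $CLC(X, \D + \la \G) = CLC(X, \D)$.

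For item $(4)$, I would arrange for $h$ to factor through a log resolution of $(X, \D)$ realizing the DLT condition (so that all exceptional divisors over the non-SNC locus have discrepancy strictly greater than $-1$), and choose $\G$ generically so that its strict transform on $Y$ meets the SNC part of $\D$ transversally. The same discrepancy computation then gives $(X, \D + \la \G)$ DLT for $\la \in (0, \la_0)$. The main obstacle I anticipate is clause $(3)$: a priori, LC centers of the perturbed pair might arise from divisors on further birational modifications of $Y$, not visible on $Y$ itself. This is resolved by the fact that LC centers of any LC pair are detected on any single log resolution, combined with the geometric input that $\B_+(L)$ avoids the LC centers of $(X,\D)$.
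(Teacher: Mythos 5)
Your proof is correct and follows essentially the same route as the paper: invoke \cite[Prop.~1.5]{ELMNP06} to find an ample divisor $H$ and $m_0$ with $\B_+(L)=\B(L-H)=Bs(|m_0(L-H)|)$, take a general $\Gamma$ in this linear system avoiding the LC centers, write $L-\la\Gamma\sim_\Q (1-\la m_0)L+\la m_0 H$ for the ampleness of $(1)$, and perturb discrepancies on a fixed log resolution for $(2)$--$(4)$. Two small points worth noting: the set of LC centers of an LC pair is in fact \emph{finite} (not merely countable), which is what the paper uses and what makes a genuinely \emph{general} $\Gamma$ suffice; and your final $\la_0$ should also incorporate the ampleness threshold $\la<1/m$, as the paper does by setting $\la_0=\min\{\la_1,1/m_0\}$.
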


\begin{proof}
By \cite[Prop. 1.5]{ELMNP06} there exists $H$, an ample $\Q$-divisor on $X$, and there exists $m_0 \in \N$ such that
$$\B_+(L)=\B(L-H)=Bs(|m_0(L-H)|).$$

Hence, as $\CLC(X,\D)$ is a finite set,
we can choose a general divisor $\Gamma$ in $|m_0(L-H)|$ such that $\supp(\Gamma)$ does not contain LC
centers of $(X,\D)$.

Define $\mu:=\frac{1}{m_0}$, so that $L-\mu\Gamma\sim_{\Q} H$ is ample.

If $\la>0$ is a sufficiently small rational number, then $\la\leq\mu$,
$(X,\D+\nolinebreak\la \Gamma)$ is an LC pair and $\CLC(X,\D+\la\Gamma)=\CLC(X,\D)$.

Moreover it is easy to see that $(X,\D+\la\Gamma)$ is DLT if $(X,\D)$ is such.

Note also that
$$L-\la\G=(1-\frac{\la}{\mu})L+\frac{\la}{\mu}(L-\mu\G)$$
is ample if $L$ is nef.
\end{proof}

\subsection{Standard log-resolutions}

Let $X$ be a normal projective variety and let $D$ be a reduced Weil divisor on $X$.

A \textit{standard log-resolution} of the pair $(X,D)$ is a log-resolution $f$ of the pair $(X,D)$ such that

\begin{itemize}
\item $f$ is a composition of blowings-up of smooth subvarieties of codimension greater than 1 up to isomorphisms;
\item $f_{|_{f^{-1}(U)}}$ is an isomorphism, where $U=X\setminus (\NSNC(D)\cup \Sing(X))$.

\end{itemize}

If $X$ is smooth and $\mathcal{I}\subseteq \mathcal{O}_X$ is a non zero ideal sheaf, then
a \textit{standard log-resolution} of the ideal sheaf $\mathcal{I}$ is a log-resolution $g$ of $\mathcal{I}$ such that
$g$ is a composition of blowings-up of smooth subvarieties of codimension greater than 1
contained in $\cosupp(\mathcal{I})$
up to isomorphisms.

In particular $g_{|_{g^{-1}(X \setminus \textrm{Cosupp}(\mathcal{I}))}}$ is an isomorphism.

\begin{remark}\label{esistenza_standard}\begin{em}
Given a normal projective variety $X$ and  a reduced Weil divisor $D$ on $X$,
there exists a standard log-resolution of the pair $(X,D)$
(this follows, for example, by \cite[Theorem 4.1.3]{Laz04} and \cite[Theorem 3.5.1]{Fuj07}).

If $Y$ is a smooth projective variety and $\mathcal{I}\subseteq \mathcal{O}_Y$ is a non zero ideal sheaf,
then there exists a standard log-resolution of $\mathcal{I}$
(see for example \cite[Theorem 35]{Kol05}).
\end{em}\end{remark}



\subsection{Zariski decomposition and birational modifications}

For our purposes we need to extend the classical definition of Zariski decomposition
in the sense of Cutkosky-Kawamata-Moriwaki
to some non $\Q$-Cartier cases. From now on we will use the following definition:

\begin{definition}\label{Zariski}\begin{em}
Let $X$ be a normal projective variety and let $D$ be a Weil $\Q$-divisor on $X$.
We say that $D$ admits a $\Q$-Zariski decomposition in the sense of Cutkosky-Kawamata-Moriwaki
(or a $\Q$-CKM Zariski decomposition) $D=P+N$ if
\begin{itemize}
\item $P$ is a $\Q$-Cartier divisor and $N$ is a Weil $\Q$-divisor;
\item $P$ is nef and $N$ is effective;
\item There exists an integer $k>0$ such that $kP$ is Cartier, $kD$ is an integral Weil divisor and for every $m\in \N$ we have an isomorphism
     $$H^0(X,\mathcal{O}_X(kmP))\simeq H^0(X,\mathcal{O}_X(kmD)).$$
\end{itemize}

\end{em}\end{definition}

Note that $kmD$ might not be a Cartier divisor but it still makes sense to consider
the reflexive sheaf $\mathcal{O}_X(kmD)$ and its $H^0$.
In particular if $D$ is $\Q$-Cartier this definition coincides with the one given in the introduction.


\begin{definition}\begin{em}
Let $(X,\D)$ be a pair with $\D$ effective.
We define the b-divisors $\A(\D)$ and $\bL(\D)$:

For every birational morphism $f:Z\to X$,
if $E$ and $F$ are effective Weil $\Q$-divisors on $Z$ without common components such that
$$K_Z+E\equiv f^*(K_X+\D)+F \quad \textrm{and} \quad f_*(E-F)=\D,$$

we put the trace $\A(\D)_Z:=E-F$ and the trace $\bL(\D)_Z:=E$.
\end{em}\end{definition}

The following lemma will be very useful to treat the case when a birational pullback of a given divisor admits a Zariski decomposition.

\begin{lemma}\label{allseason}
Let $(X,\D)$ be a pair such that $\D$ is effective, let $D\in \Div_{\Q}(X)$ and let $a \in \Q$.
If there exists a projective birational morphism $f:Z\rightarrow X$ such that
$f^*(D)=P+N$ is a $\Q$-CKM Zariski decomposition,
then there exist  Weil $\Q$-divisors  $D',P',N',\D_Z$ such that
\begin{itemize}
\item $\D_Z$ is effective;
\item $D'=P'+N'$ is a $\Q$-CKM Zariski decomposition;
\item $\D_Z-N'=\mathbf{A}(\D)_Z-aN$, so that in particular $(Z,\D_Z-N')$ is a pair;
\item $P'=bP$ for some $b>0$;
\item $t_0P'-(K_Z+\D_Z-N')=P+f^*(aD-(K_X+\D))$ for some $t_0 \in\Q$.
\end{itemize}

In particular if
 $D$ is big and $aD-(K_X+\D)$ is nef  \textbf{or} if
 $aD-(K_X+\D)$ is big and nef, then
 $$t_0P'-(K_Z+\D_Z-N')$$
is big and nef.
\end{lemma}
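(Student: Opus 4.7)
The plan is to exploit the decomposition $\A(\D)_Z = E - F$ supplied directly by the b-divisor $\A(\D)$: on $Z$ there exist effective $\Q$-divisors $E, F$ without common components such that $K_Z + E \equiv f^*(K_X+\D) + F$ and $f_*(E-F) = \D$. A preliminary observation, used crucially below, is that $F$ is $f$-exceptional. Indeed, any non-exceptional prime component of $F$ would push down to a prime divisor on $X$; since $E$ and $F$ share no components, no component of $E$ could map to it, so $f_*(E-F)$ would acquire a negative coefficient there, contradicting the effectivity of $\D$. In particular $f(\supp F)$ has codimension at least two in the normal variety $X$, so Hartogs gives $f_*\mathcal{O}_Z(mF) = \mathcal{O}_X$ whenever $mF$ is integral.

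The construction is then the following: fix a positive integer $b \geq a$ (for instance $b := \max\{\lceil a \rceil, 1\}$) and set $P' := bP$, $N' := bN + F$, $D' := P' + N' = b f^*D + F$, $\D_Z := E + (b-a)N$, and $t_0 := (a+1)/b$. Then $\D_Z$ is visibly effective, the identity $\D_Z - N' = E - F - aN = \A(\D)_Z - aN$ is immediate, and $P'$ is a nef $\Q$-Cartier divisor. Using $K_Z \equiv f^*(K_X+\D) + F - E$ one computes $K_Z + \D_Z - N' \equiv f^*(K_X+\D) - aN$, whence $t_0 P' - (K_Z + \D_Z - N') \equiv (a+1)P + aN - f^*(K_X+\D) \equiv P + f^*(aD - (K_X+\D))$, which is the required identity.

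The main step is to verify that $D' = P' + N'$ is a $\Q$-CKM Zariski decomposition. The nef-plus-effective conditions are clear; for the $H^0$ condition, choose $k > 0$ sufficiently divisible so that $kP$, $kN$, $kbD$, $kF$ are integral, $kbP$ is Cartier, and $H^0(Z, \mathcal{O}_Z(kmP)) \simeq H^0(Z, \mathcal{O}_Z(kmf^*D))$ holds for every $m \in \N$ (this last isomorphism is supplied by the hypothesis that $f^*D = P+N$ is a Zariski decomposition). Substituting $m \mapsto mb$ and applying the projection formula with $f_*\mathcal{O}_Z = \mathcal{O}_X$ yields $H^0(Z, \mathcal{O}_Z(kmbP)) \simeq H^0(X, \mathcal{O}_X(kmbD))$. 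On the other hand, using $f_*\mathcal{O}_Z(kmF) = \mathcal{O}_X$ and the projection formula again, $H^0(Z, \mathcal{O}_Z(kmD')) = H^0(Z, \mathcal{O}_Z(kmbf^*D + kmF)) \simeq H^0(X, \mathcal{O}_X(kmbD))$. Combining the two identifications gives the required isomorphism $H^0(Z, \mathcal{O}_Z(kmP')) \simeq H^0(Z, \mathcal{O}_Z(kmD'))$.

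For the final assertion, the positive part of a CKM Zariski decomposition has the same volume as the decomposed divisor; hence $P$ is big whenever $f^*D$ (equivalently $D$) is big, and $P$ is always nef. The pullback $f^*(aD - (K_X+\D))$ is nef (respectively big and nef) whenever $aD - (K_X+\D)$ is, since birational pullback preserves both properties. Consequently, in either of the stated cases, $P + f^*(aD - (K_X+\D))$ is a sum of a big-and-nef divisor with a nef one, hence is itself big and nef. The most delicate point in the argument is the pushforward identity $f_*\mathcal{O}_Z(kmF) = \mathcal{O}_X$ which powers the Zariski decomposition of $D'$; once $F$ is recognized as $f$-exceptional, all else reduces to rearranging the identity $K_Z + E \equiv f^*(K_X+\D) + F$.
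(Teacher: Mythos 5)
Your proof is correct and takes essentially the same approach as the paper: write $\A(\D)_Z$ as the difference of its effective and exceptional parts, scale $P$ and $N$ suitably to absorb the sign of $a$, and use that the exceptional part $F=A^-$ contributes nothing to $H^0$ via the projection formula. The only differences are cosmetic — the paper scales by $a''+1$ with $a''=\max\{0,a\}$ and splits $a=a''-a'$ to allocate multiples of $N$ between $\D_Z$ and $N'$, whereas you take a single integer $b\geq\max\{a,1\}$, and you spell out the ``Fujita's lemma'' step (the identity $f_*\mathcal{O}_Z(kmF)=\mathcal{O}_X$ for effective $f$-exceptional $F$) that the paper cites without proof.
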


\begin{proof}
Define $a'=-\min\{0,a\}$ and $a''=\max\{0,a\}$, so that $a'\geq 0$, $a''\geq 0$ and
$a=a''-a'$.
Moreover we can write $\A(\D)_Z=A^+ -A^-$,
where $A^+$ and $A^-$ are effective and without common components, so that $A^-$ is $f$-exceptional.
We define
$\D_Z:=A^+ + a'N$, $N':=a''N+A^-$, $P':=(a''+1)P$ and $D':=P'+N'$.

Then it is immediate that $\D_Z$ is effective,
$\D_Z-N'= \mathbf{A}(\D)_Z-aN$ and $P'$ is a positive rational multiple of $P$.

Moreover $P'$ is a nef $\Q$-Cartier divisor, $N'$ is effective
and, by using the hypothesis and Fujita's lemma (see for example \cite[Lemma 1-3-2]{KMM85}), we can see that there exists $k'\in\N$ such that
$$H^0(Z,\mathcal{O}_Z(k'mP'))\simeq H^0(Z,\mathcal{O}_Z(k'mD'))$$
for every $m \in \N$,
%
%
so that $D'=P'+N'$ is a $\Q$-CKM Zariski decomposition.

Now note that
$$P+f^*(aD-(K_X+\D))=(a+1)P+aN-(K_Z+\A(\D)_Z)=$$
$$=(a+1)P+a''N-(K_Z+\A(\D)_Z+a'N)
=(a+1)P-(K_Z+\D_Z-N')=$$
$$=t_0P'-(K_Z+\D_Z-N'),$$

where $t_0=\frac{a+1}{a''+1}\in \Q$.
\end{proof}

\section{Computation via MMP}
In this section we show in Theorem \ref{antonella} that Conjecture 1 holds in dimension $n$ if we assume the existence of minimal models for LC pairs of log-general type of dimension $n$ and the abundance conjecture in dimension $n-1$.
By using that these conjectures hold true in low dimension we obtain Conjecture 1 in dimension less than or equal to 4 (cf. corollary \ref{dim4}).

Before stating the theorems let us fix some notation and definitions that we will use throughout the section:
\begin{itemize}
\item We say that a pair $(X,\D)$ is of \emph{log-general type} if $K_X+\D\in \Div_\Q(X)$ is big;
\item We refer to \cite[definition 3.50]{KM00} for the definition of \emph{minimal model} of a DLT pair.

\item We refer to \cite[Definition 1.1]{Fuj00} for the definition of \emph{semi log canonical} (or \emph{sLC}) $n$-fold:


\item We say that \emph{sLC-abundance} holds in dimension $n$ if for every sLC $n$-fold $(X,\D)$
such that $K_X+\D$ is nef we have that $K_X+\D$ is semiample.
\end{itemize}

\begin{lemma}\label{caldaia}
Let $(X,\D)$ be an LC pair, with $\D$ effective. Let $D\in \Div_\Q(X)$  be such that
\begin{itemize}
\item $D$ is big;
\item $aD-(K_X+\D)$ is nef for some rational number $a\geq 0$;
\item $\B_+(D)$ does not contain any LC center of the pair $(X,\D)$.
\end{itemize}
Then there exists an LC pair $(X,\D')$, with $\D'$ effective, and a rational number $q>0$ such that
$qD-(K_X+\D')$ is ample.

Moreover $\CLC(X,\D)=\CLC(X,\D')$ and $(X,\D')$ is DLT if $(X,\D)$ is such.
\end{lemma}

\begin{proof}
By Lemma \ref{giocoB_+} there exist an effective Cartier divisor $\Gamma$ and two positive rational numbers $\la,\mu$ such that
$D-\mu\Gamma$ is ample and $(X,\D+\la\G)$ is an LC pair such that $\CLC(X,\D)=\CLC(X,\D+\la\G)$.
Moreover $(X,\D+\la\G)$ is DLT if $(X,\D)$ is such.

We put $q:=a+\frac{\la}{\mu}$ and $\D':=\D+\la\G$, so that
$$qD-(K_X+\D')=aD-(K_X+\D)+\frac{\la}{\mu}(D-\mu\G),$$
is ample.
\end{proof}

The next theorem shows that, by using the finite generation of log-canonical rings on KLT pairs (see \cite{BCHM10}), it is easy to get Conjecture 1 for DLT pairs.

\begin{theorem}\label{DLT_BCHM}
Let $(X,\D)$ be a DLT pair and let $D\in Div_\Q(X)$ be such that
\begin{enumerate}
\item $D$ is big;
\item $aD-(K_X+\D)$ is nef for some rational number $a\geq 0$;
\item $\B_+(D)$ does not contain any LC center of the pair $(X,\D)$.
\end{enumerate}

Then $R(X,D)$ is finitely generated
\end{theorem}

\begin{proof}
By lemma \ref{caldaia} we can suppose that $D-(K_X+\D)$ is an ample $\Q$-divisor. Thus, by the following remark, $R(X,D)$ is finitely generated.
\end{proof}

\begin{remark}\begin{em}
As prof. S\'ebastien Boucksom kindly pointed out, if $(X,\D)$ is a DLT pair, then  it follows by \cite[Corollary 1.1.2]{BCHM10} that the graded ring $R(X,K_X+\D+A)$ is finitely generated for every ample divisor $A\in \Div_\Q(X)$.

In fact there exists $\D'\in \Div_{\Q}(X)$ such that $\D'\sim_\Q \D+A$ and $(X,\D')$ is KLT (see for example \cite[Proposition 2.12]{Loh11}).

\end{em}\end{remark}

\begin{theorem}\label{allianz}
Let $(X,\D)$ be an LC pair of log-general type of dimension $n$, with $\D$ effective.

Suppose that minimal models exist for every $\Q$-factorial DLT pair of dimension $n$ of log-general type and that sLC abundance holds in dimension $n-1$.


Then the graded ring $R(X,K_X+\D)$ is finitely generated.
\end{theorem}

\begin{proof}
Up to performing a DLT blow-up (see \cite[Theorem 10.4]{Fuj11}) and passing to a minimal model, we can suppose that $(X,\D)$ is a $\Q$-factorial DLT pair and $K_X+\D$ is nef and big.
Moreover, if we set $S=[\D]$, then $(K_X+\D)_{|_S}$ is semiample by sLC abundance in dimension $n-1$.
Then we conclude by applying, for example, \cite[Theorem 1.1]{Fuj12}.
\end{proof}

\begin{theorem}\label{antonella}
Let $(X,\D)$ be an LC pair of dimension $n$ with $\D$ effective.
Let $D\in \Div_\Q(X)$ be such that

\begin{enumerate}
\item $D$ is big;
\item $aD-(K_X+\D)$ is nef for some rational number $a\geq 0$;
\item $\B_+(D)$ does not contain any LC center of the pair $(X,\D)$.
\end{enumerate}

Also suppose that minimal models exist for every $\Q$-factorial DLT pair of dimension $n$ of log-general type and that sLC-abundance holds
in dimension $n-1$.

Then $R(X,D)$ is finitely generated.
\end{theorem}

\begin{proof}
By Lemma \ref{caldaia} we can suppose that $D-(K_X+\D)$ is ample.





Hence by \cite[Lemma 5.17]{KM00} there exists an effective ample $\Q$-Cartier $\Q$-divisor $H$ such that
$$D-(K_X+\D)\sim_\Q H,$$

and $(X,\D+H)$ is an LC pair.

In other words, if we put $\D_0:=\D+H$, then $D\sim_\Q K_X+\D_0$ and $(X,\D_0)$ is an LC pair.

Therefore we conclude by Theorem \ref{allianz}.
\end{proof}

\begin{corollary}\label{dim4}
Let $(X,\D)$ be an LC pair such that $\D$ is effective and $\dim\; X\leq 4$.
If $D\in \Div_\Q(X)$ is such that

\begin{enumerate}
\item $D$ is big;
\item$aD-(K_X+\D)$ is nef for some rational number $a\geq 0$;
\item $\B_+(D)$ does not contain any LC center of the pair $(X,\D)$;
\end{enumerate}

then $R(X,D)$ is finitely generated.
\end{corollary}

\begin{proof}
sLC-abundance in dimension 3 holds by \cite[Theorem 0.1]{Fuj00}, while
every DLT $\Q$-factorial pair of dimension 4 of log-general type has a minimal model by \cite[Corollary 3.6]{AHK07}.
Hence we can apply Theorem \ref{antonella} and we are done.
\end{proof}

\section{DLT logbig case}

The aim of this section is to prove Theorem \ref{DLT_logbig},
by reducing ourselves to
the hypotheses of \cite[Theorem 5.1]{Fuj12}.
Note that, in particular, Theorem \ref{DLT_logbig} implies Conjecture 2 in the DLT case.

\begin{definition}\label{logbig_def}\begin{em}
Let $(X,\D)$ be a pair and let $L\in \Div_{\Q}(X)$.

We say that $L$ is \emph{logbig} for the pair $(X,\D)$ if $L$
is big and $L_{|_V}$ is big for every $V\in \CLC(X,\D)$.

Moreover given an integer $k\in\{1, \dots, n\}$ we say that $L$ is \emph{logbig in codimension $k$}
if $L$ is big and $L_{|_V}$ is big for every $V\in \CLC(X,\D)$ such that $\codim_X\; V=k$.
\end{em}\end{definition}

\begin{theorem}\label{DLT_logbig}
Let $(X,\D)$ be an LC pair, with $\D$ effective, and let $D \in \Div_\Q(X)$.
If:
\begin{enumerate}
\item $D$ is big;
\item $aD-(K_X+\D)$ is nef for some rational number $a\geq 0$;
\item There exists a projective birational morphism $f:Z\to X$ such that $$f^*(D)=P+N$$ is a $\Q$-CKM Zariski decomposition
and the pair $(Z,\bL(\D)_Z)$ is DLT;
\item $P$ is logbig for the pair $(Z,\bL(\D)_Z)$;
\end{enumerate}
then $P$ is semiample. Hence $R(X,D)$ is finitely generated.
\end{theorem}

\begin{proof}
Let us apply Lemma \ref{allseason} and take $t_0\in \Q$ and $D'$, $P'$, $N'$, $\D_Z$ Weil $\Q$-divisors on $Z$ as in the lemma,
so that in particular $D'=P'+N'$ is a $\Q$-CKM Zariski decomposition.

Define $B:=\D_Z-N'=\A(\D)_Z-aN\leq \bL(\D)_Z$,
so that $t_0P'-(K_Z+B)=P+f^*(aD-(K_X+\D))$ is big and nef.

Moreover $t_0P'-(K_Z+B)$ is logbig for the pair $(Z,\bL(\D)_Z)$ because $P$ is such.

Then we have that $t_0P'-(K_Z+B)$ is logbig for the pair $(Z,B)$, because $\CLC(Z,B)\subseteq \CLC(Z,\bL(\D)_Z)$.

Now, thanks to the main Theorem of \cite{Sza95}, the DLTness of $(Z,\bL(\D)_Z)$ implies that every LC center of $(Z,\bL(\D)_Z)$ is not contained in $\Sing(Z)\cup \NSNC(\bL(\D)_Z)$.

Thus it is easy to see
that every LC center of the pair $(Z,B)$ is not
contained in $\Sing(Z)\cup \NSNC(B)$.

Let $\mu:Z'\rightarrow Z$ be a standard log-resolution of the pair $(Z,B)$,
so that $(Z',\A(B)_{Z'})$ is an LC pair, $Z'$ is smooth and $\A(B)_{Z'}$ has SNCS.

Now we choose $k_0 \in \N$ such that $k_0P'$ is a Cartier divisor, $k_0D'$ is integral and

$$H^0(Z,\mathcal{O}_Z(mk_0P'))\simeq H^0(Z,\mathcal{O}_Z(mk_0D'))$$

for all $m \in \N$.

Moreover if we write $\A(B)_{Z'}=(B')_+-(B')_-$, where $(B')_+$ and $(B')_-$ are effective divisors and they have not common components,
then $\mu_*(\ulcorner (B')_-\urcorner )\leq \ulcorner N'\urcorner $, because $\D_Z$ is effective.
Thus, by the projection formula,
 we get that, for all $m \in \N$,
$$h^0(Z', \mathcal{O}_{Z'}(\mu^*(mk_0P')+ \ulcorner (B')_- \urcorner))\leq h^0(Z,\mathcal{O}_Z(mk_0P'+\ulcorner N'\urcorner))\leq$$
$$\leq h^0(Z,\mathcal{O}_Z(mk_0D'))=h^0(Z,\mathcal{O}_Z(mk_0P'))=h^0(Z',\mathcal{O}_{Z'}(\mu^*(mk_0P'))).$$

Note also that
$$t_0\mu^*(P')-(K_{Z'}+\A(B)_{Z'})\equiv\mu^*(t_0P'-(K_Z+B))$$
is big and nef, being the birational pullback of a big and nef divisor.

$ $

We will prove that $\mu^*(t_0P'-(K_Z+B))$ is logbig for the pair $(Z',\A(B)_{Z'})$:

Let $V \in \CLC(Z',\A(B)_{Z'})$. Then
$\mu(V)\not\subseteq \Sing(Z)\cup \NSNC(B)$.
Thanks to the choice of $\mu$ this implies that $V\not\subseteq \exc(\mu)$,
so that $\mu_{|_V}$ is birational.

Consider the following commutative diagram:

$$\xymatrix{V \ar@{^{(}->}[r] \ar[d]^{\mu_{|_V}} & Z' \ar[d]^{\mu}\\
\mu(V)\ar@{^{(}->}[r]&  Z}$$

We know that $t_0P'-(K_Z+B)$ is logbig for the pair $(Z,B)$,
which implies that $\big{(}t_0P'-(K_Z+B)\big{)}_{|_{\mu(V)}}$ is big.

Then, by birationality of $\mu_{|_V}$, we have that $\mu_{|_V}^*\big{(}(t_0P'-(K_Z+B))_{|_{\mu(V)}}\big{)}$ is a big
$\Q$-divisor on $V$.

But, by commutativity of the diagram, we have that $$\mu_{|_V}^*\big{(}(t_0P'-(K_Z+B))_{|_{\mu(V)}}\big{)}=
\big{(}\mu^*(t_0P'-(K_Z+B))\big{)}_{|_{V}}.$$

Thus we have proved that $\mu^*(t_0P'-(K_Z+B))$ is big when restricted to each LC center
of the pair $(Z',\A(B)_{Z'})$, whence it is logbig for the pair $(Z',\A(B)_{Z'})$.

Hence $t_0\mu^*(P')-(K_{Z'}+\A(B)_{Z'})$ is logbig for the pair $(Z',\A(B)_{Z'})$.

Therefore we can apply \cite[Theorem 5.1]{Fuj12} to the divisor $\mu^*(P')$ and the pair $(Z',\A(B)_{Z'})$, so that $\mu^*(P')$ is semiample,
which implies that $P$ is semiample.
\end{proof}

Let us consider the following alternative version of Theorem \ref{DLT_logbig}:

\begin{corollary}\label{DLT_bpf}
Let $(X,\D)$ be an LC pair with $\D$ effective and let $D \in \Div_\Q(X)$.
If:
\begin{enumerate}
\item $aD-(K_X+\D)$ is big and nef for some rational number $a\geq 0$;
\item There exists a projective birational morphism $f:Z\to X$ such that $$f^*(D)=P+N$$ is a $\Q$-CKM Zariski decomposition
and the pair $(Z,\bL(\D)_Z)$ is a DLT pair;
\item $f^*(aD-(K_X+\D))$ is logbig for the pair $(Z,\bL(\D)_Z)$.
\end{enumerate}
then $P$ is semiample. 
\end{corollary}

\begin{proof}
As in the proof of Theorem \ref{DLT_logbig}, we can apply Lemma \ref{allseason} and take $t_0\in \Q$ and $D'$, $P'$, $N'$, $\D_Z$ Weil $\Q$-divisors on $Z$ as in the lemma.

Define $B:=\D_Z-N'$,
so that $t_0P'-(K_Z+B)=P+f^*(aD-(K_X+\D))$ is nef and logbig for the pair $(Z,\bL(\D)_Z)$, as $f^*(aD-(K_X+\D))$ is such.
The rest of the proof is exactly the same as in Theorem \ref{DLT_logbig}.
\end{proof}

\section{LC logbig case: dimension 3}

\begin{definition}\begin{em}
Let $(X,\D)$ be a pair such that $\dim\; X=n$ and let $k\in\{1,\dots,n\}$. We define

$$\Ndlt(X,\D):=\bigcup_{\substack{V\in \CLC(X,\D)\\V\subseteq \NSNC(\D)\cup \Sing(X)}} V,
\quad\quad\quad \Nklt_k(X,\D):=\bigcup_{\substack{V\in \CLC(X,\D)\\ \dim\; V\leq n-k}} V.$$

Note that if $(X,\D)$ is DLT then $\Ndlt(X,\D)=\emptyset$ by \cite{Sza95}.
\end{em}\end{definition}

\begin{theorem}\label{estate}
Let $(X,\D)$ be a pair and suppose that $\D=\sum_{i \in I} d_i D_i$, where all the $D_i$'s are distinct prime divisors and $d_i\leq 1$ for every $i\in I$.

Moreover suppose that $P \in \Div_{\Q}(X)$ and
we can write $\D=\D_+ - \D_-$, where $\D_+$ and $\D_-$ are effective $\Q$-divisors and the following properties are satisfied:

\begin{enumerate}
\item $P$ is nef;
\item $t_0P-(K_X+\D)$ is ample for some $t_0 \in \Q^+$;
\item There exists $k_0 \in \N$ such that $k_0P$ is a Cartier divisor and  for all $m \in \N$ it holds that
$$H^0(X,\mathcal{O}_X(mk_0P))\simeq H^0(X, \mathcal{O}_X(mk_0P+\ulcorner \D_-\urcorner));$$
\item $\Ndlt(X,\D)=\emptyset$, or $P_{|_{\Ndlt(X,\D)}}$ is semiample;

\item There exists $\mu:X'\rightarrow X$, a standard log-resolution of the pair $(X,\D)$, such that $a(E,X,\D)>-2$ for every prime
divisor $E\subseteq X'$.
\end{enumerate}

Then $P$ is semiample.
\end{theorem}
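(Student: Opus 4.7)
The plan is to pull back to the log-resolution $\mu\colon X'\to X$ furnished by hypothesis (5), translate the saturation (3) into a saturation condition on $X'$, and then apply a Fujino--Ambro--type injectivity argument on the SNC sub-pair $(X',\A(\D)_{X'})$, using (4) to handle the non-KLT locus.

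First I would set $\Theta:=\A(\D)_{X'}$, so that $K_{X'}+\Theta\sim_\Q\mu^*(K_X+\D)$ and $\Theta$ has SNC support; hypothesis (5) guarantees that every coefficient of $\Theta$ is strictly less than $2$. Pulling back (2) yields that $t_0\mu^*P-(K_{X'}+\Theta)\sim_\Q\mu^*(t_0P-(K_X+\D))$ is nef and big on $X'$. Since $\mu_*\mathcal O_{X'}=\mathcal O_X$, the semiampleness of $P$ is equivalent to that of $\mu^*P$, so the whole argument takes place on $X'$.

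Next I would transport (3) to $X'$. Write $\Theta=\Theta^+-\Theta^-$ with effective parts having no common components. Since the condition $d_i\le1$ forces every prime with coefficient $>0$ in $\D_-$ to have the matching coefficient on its strict transform, the divisor $\ulcorner\Theta^-\urcorner-\mu^{-1}_*\ulcorner\D_-\urcorner$ is effective and $\mu$-exceptional, so $\mu_*\mathcal O_{X'}(\ulcorner\Theta^-\urcorner)=\mathcal O_X(\ulcorner\D_-\urcorner)$ by normality of $X$. The projection formula combined with (3) then gives
\[
H^0\bigl(X',\mathcal O_{X'}(\mu^*(mk_0P)+\ulcorner\Theta^-\urcorner)\bigr)\simeq H^0\bigl(X',\mathcal O_{X'}(\mu^*(mk_0P))\bigr)
\]
for every $m\in\N$, which is the SNC saturation we need on $X'$.

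To conclude semiampleness of $\mu^*P$, I would split $\Theta$ by coefficient: the components of $\Theta^+$ with coefficients in $(1,2)$ are $\mu$-exceptional, and hence map into $\Nklt(X,\D)$; together with $\mu^{-1}(\Nklt(X,\D))$ they form a closed subscheme $W\subseteq X'$ on which $\mu^*P|_W$ is semiample by (4). Applying a Kollár--Fujino injectivity statement to the SNC sub-pair $(X',\Theta^{\le 1})$ with the nef-big twist $t_0\mu^*P-(K_{X'}+\Theta)$ shows that the restriction map $H^0(X',\mu^*(mk_0P))\to H^0(W,\mu^*(mk_0P)|_W)$ is surjective for $m\gg 0$; combined with the saturation above, the base-point free sections on $W$ coming from (4) extend to $X'$. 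On the complement of $W$, where $\Theta$ has coefficients $<1$, a perturbation of $\mu^*P$ via Lemma \ref{giocoB_+} and Kawamata's theorem \ref{Kawamata} applied to the resulting KLT pair give base-point freeness, and glueing yields the semiampleness of $\mu^*P$. The main obstacle I expect lies in this last step: the pair $(X',\Theta)$ need not be log canonical, because the coefficients of $\Theta$ may lie in $(1,2)$, so the relevant injectivity theorem must be applied to the truncation $\Theta^{\le1}$ while accounting for the difference through the saturation; identifying $W$ correctly and verifying the surjectivity of the restriction map is the technical heart of the argument, and it is exactly where hypothesis (5) is used to keep the bad coefficients below the threshold $2$.
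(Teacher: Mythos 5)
Your proposal gets several structural elements right — pulling back to the log-resolution, transporting the saturation via the projection formula, and observing that the coefficient-$(1,2)$ components of $\A(\D)_{X'}$ are $\mu$-exceptional and hence map into $\Nklt(X,\D)$. But there are two genuine gaps.

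The more serious one is the fate of the \emph{non-exceptional} components of $\A(\D)_{X'}$ with coefficient \emph{exactly} $1$, i.e.\ the strict transforms $\mu_*^{-1}D_i$ with $d_i=1$. These are LC centers of $(X',\Theta^{\le1})$, yet they lie neither in your $W$ (which contains only the $(1,2)$-coefficient components and $\mu^{-1}(\Nklt(X,\D))$) nor in the KLT locus. Hypothesis (4) gives you no control over $P$ restricted to such $D_i$ when $D_i\not\subseteq\Nklt(X,\D)$, so your proposed ``KLT argument on the complement of $W$'' cannot be run there, and the intended glueing fails. The paper's proof fixes this by replacing $\A(\D)_{X'}$ with $\widehat{\D}_\epsilon=(1-\epsilon)\A(\D)_{X'}+\sum\delta_jE_j$: the ampleness (not mere nef-and-bigness) of $t_0P-(K_X+\D)$ in hypothesis (2) absorbs the perturbation, and the $(1-\epsilon)$ factor pushes every non-exceptional coefficient strictly below $1$. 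After this step, the resulting non-KLT locus $T$ is automatically exceptional and $\mu(T)\subseteq\Nklt(X,\D)$, which is the whole point. Skipping the rescaling is not a cosmetic omission — without it, the locus you need to dodge is strictly larger than what (4) controls.

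Secondly, the conclusion step is misattributed. The paper does not run a separate KLT argument and glue; once $\B(\mu^*P)\cap Nklt(X',\widehat{\D})=\emptyset$ is established, it invokes Ambro's semiampleness criterion \cite[Theorem 2.1]{Amb05} on the SNC pair $(X',\widehat{\D})$, which handles the KLT part and the non-KLT part simultaneously. Also, to get surjectivity of the restriction $H^0(X',\mu^*(kP)+\ulcorner\Dc_-\urcorner)\to H^0(T,\cdot)$ the paper uses Kawamata--Viehweg vanishing for the twist $\mu^*(kP)+\ulcorner-\Dc\urcorner$, whereas you reach for a Kollár--Fujino \emph{injectivity} statement; injectivity theorems alone do not produce the needed surjectivity, so this step as written would not close.
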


\begin{proof}
Let $\mu:X'\to X$ be as in the hypothesis.
Note that $\Nklt(X',\A(\D)_{X'})=\supp((\A(\D)_{X'})^{\geq 1})$,
because $X'$ is smooth and $\A(\D)_{X'}$ is SNCS.

Now, by the ampleness of $t_0P-(K_X+\D)$, for all $\mu$-exceptional divisors $E_1,\dots,E_s$ on $X'$ there exist
 arbitrarily small coefficients $\delta_1,\dots,\delta_s \in \Q^+$, such that
$$\mu^*(t_0P-(K_X+\D))-\sum_{j=1}^s \delta_j E_j$$ is ample.
Then, if $0\leq \varepsilon \ll 1$ we have that
$\mu^*(t_0P)-(K_{X'}+(1-\varepsilon)\A(\D)_{X'}+\sum_{j=1}^s \delta_j E_j)$ is still ample.
For every $\varepsilon$ sufficiently small, such that the above condition holds, we define
$$\widehat{\D}_\varepsilon=(1-\varepsilon)\A(\D)_{X'}+\sum \delta_j E_j,$$
so that $\mu^*(mP)-(K_{X'}+\widehat{\D}_\varepsilon)$ is ample for every integer $m \geq t_0$ thanks to the nefness of $P$.
Now we can write
$$\A(\D)_{X'}=\sum_{k \in K} c_k X_k+\sum_{l \in L} a_l Y_l-\sum_{m \in M} b_m Z_m,$$
where, for every $k\in K$ , $l \in L$ and $m \in M$, we have that  $X_k, Y_l, Z_m$ are pairwise distinct prime divisors, and
$$b_m>0 \quad\forall m \in M, \quad 0\leq a_l <1\quad\forall l \in L,\quad 1\leq c_k < 2\quad\forall k \in K:$$
In fact all the coefficients of $\A(\D)_{X'}$ are smaller than 2 because of the choice of $\mu$.

Moreover we can suppose that $\exc(\mu)\subseteq \supp(\sum X_k+\sum Y_l+\sum Z_m)$,
by considering among the $Y_l$'s also the $\mu$-exceptional prime divisors not appearing in $\supp(\A(\D)_{X'})$, with coefficient 0.
Let us define
$$\D'_+:=\sum_{k \in K} c_k X_k +\sum_{l \in L} a_l Y_l; \quad\quad \D'_-:=\sum_{m \in M} b_m Z_m,$$

so that $\D'_+$ and $\D'_-$ are effective, they have no common components and $\A(\D)_{X'}=\D'_+ -\D'_-$.
Moreover for every $k \in K$, $l \in L$ and $m \in M$ we define
$$\gamma_k=\left\{ \begin{array}{ll} \delta_j & \textrm{if } X_k=E_j \\ 0 & \textrm{otherwise}\end{array}\right.;
\quad \gamma_l=\left\{ \begin{array}{ll} \delta_j & \textrm{if } Y_l=E_j \\ 0 & \textrm{otherwise}\end{array}\right. ;
\quad \gamma_m=\left\{ \begin{array}{ll} \delta_j & \textrm{if } Z_m=E_j \\ 0 & \textrm{otherwise}\end{array}\right.$$
so that we can write
$$\widehat{\D}_\varepsilon= \sum_{k \in K} ((1-\varepsilon)c_k+\gamma_k)X_k+ \sum_{l \in L} ((1-\varepsilon)a_l+\gamma_l)Y_l-
\sum_{m \in M} ((1-\varepsilon)b_m-\gamma_m)Z_m.$$
Now we choose $\varepsilon$ and the $\delta_j$'s small enough such that the following inequalities hold:

\begin{itemize}
\item $c'_k:=(1-\varepsilon)c_k+\gamma_k<2 \quad \forall k \in K$;
\item $a'_l:=(1-\varepsilon)a_l+\gamma_l<1 \quad \forall l \in L$;
\item $b'_m:=(1-\varepsilon)b_m-\gamma_m>0 \quad \forall m \in M$,
\end{itemize}
and we define $\widehat{\D}:=\widehat{\D}_\varepsilon$.
Hence $\widehat{\D}=\sum c'_k X_k +\sum a'_l Y_l-\sum b'_m Z_m$,
and
\begin{itemize}
\item $0< c'_k<2 \quad \forall k \in K$;
\item $0\leq a'_l<1 \quad \forall l \in L$;
\item $0<b'_m\leq b_m \quad \forall m \in M$.

\end{itemize}

Note that $$\mu^*(mP)-(K_{X'}+\widehat{\D})$$ is ample for every integer $m \geq t_0$
and $\widehat{\D}$ is a divisor with SNCS because $\supp(\widehat{\D})\subseteq \supp(\A(\D)_{X'})\cup \exc(\mu)$,
so that $\Nklt(X',\widehat{\D})=\supp((\widehat{\D})^{\geq1})$.

Now we define
$$\widehat{\D}_+:=\sum c'_k X_k+\sum a'_l Y_l; \quad \widehat{\D}_-:=\sum b'_m Z_m,$$
so that $\widehat{\D}_+$ and $\widehat{\D}_-$ are effective and $\widehat{\D}=\widehat{\D}_+-\widehat{\D}_-$.

We claim that $\mu_* \ulcorner \widehat{\D}_- \urcorner \leq \ulcorner \D_- \urcorner$:

In fact $\widehat{\D}_-\leq \D'_-$, so that it suffices to show that
$\mu_*\ulcorner \D'_-\urcorner \leq \ulcorner \D_- \urcorner$.
In particular we will show that $\mu_*\D'_-\leq \D_-$.

This holds because, by definition,
$\D'_-=\sum_{a(E,X,\D)>0} a(E,X,\D)E$.
Hence $$\mu_*(\D'_-)=\sum_{a(\mu_*^{-1}D_i,X,\D)>0} a(\mu_*^{-1}D_i,X,\D) D_i=\sum_{d_i<0}-d_i D_i\leq \D_-,$$
as $\D_-$ is effective and $\D_-=\D_+ -\D$, so that, for every $i$,
$\ord_{D_i} \D_-=\ord_{D_i} \D_+ -d_i\geq - d_i.$

$ $

Thanks to the claim, by using the projection formula, we obtain that if $k_0$ is a positive integer as in the hypothesis, then
$$h^0(X',\mathcal{O}_{X'}(\mu^*(k_0mP)+\ulcorner \widehat{\D}_-\urcorner))\leq h^0(X,\mathcal{O}_X(k_0mP+\ulcorner \D_-\urcorner))$$
for all $m\ \in \N$.
But, by hypothesis,
$h^0(X', \mathcal{O}_{X'}(\mu^*(k_0mP)))=h^0(X,\mathcal{O}_X(k_0mP))=h^0(X,\mathcal{O}_X(k_0mP+\ulcorner \D_-\urcorner))$.
Therefore, for all $m\in \N$,
$$H^0(X',\mathcal{O}_{X'}(\mu^*(k_0mP)))\simeq H^0(X', \mathcal{O}_{X'}(\mu^*(k_0mP)+\ulcorner \widehat{\D}_-\urcorner)).$$

We will show the semiampleness of $P$ by applying \cite[Theorem 2.1]{Amb05} to the pair $(X',\widehat{\D})$ and the divisor
$\mu^*(P)$.

In particular, in order to apply the theorem it remains to show that
$\B(\mu^*(P))\cap \Nklt(X',\widehat{\D})=\nolinebreak\emptyset$:

Note that
$$\Nklt(X',\widehat{\D})=\supp(({\widehat{\D})^{\geq 1}})\subseteq \bigcup_{k \in K} X_k =\supp((\A(\D)_{X'})^{\geq 1})=\Nklt(X',\A(\D)_{X'}).$$
Moreover $\Nklt(X',\widehat{\D})\subseteq \exc(\mu)$:

In fact if $k \in K$ is such that $X_k$ is not exceptional,
then $X_k=\mu_*^{-1}G$, for some prime divisor $G$ on $X$. Then $c_k=a(\mu_*^{-1}G,X,\D)=-\textrm{ord}_G \D\geq -1$, thanks to the hypotheses on $\D$.
On the other hand $\gamma_k=0$ because $X_k$ is not exceptional, so that $c'_k=(1-\varepsilon)c_k< c_k\leq 1$.

Thus we get that $\Nklt(X',\widehat{\D})\subseteq \Nklt(X',\A(\D)_{X'})\cap \exc(\mu)$.
Now we define $$T=\sum_{c'_k\geq 1} X_k,$$
so that $T$ is reduced and $T=\supp((\widehat{\D})^{\geq 1})=\Nklt(X',\widehat{\D})$.
In particular $T\subseteq \Nklt(X',\A(\D)_{X'})\cap \exc(\mu)$.

Let $T_0$ be a prime divisor in the support of $T$. Then, on the one hand, $T_0 \subseteq \supp((\A(\D)_{X'})^{\geq 1})$,
that is $a(T_0,X,\D)\leq -1$, which implies that $\mu(T_0) \in \CLC(X,\D)$.

On the other hand $T_0 \subseteq \exc(\mu)$ implies that $\mu(T_0)\subseteq \Sing(X)\cup \NSNC(\D)$,
because $\mu$ is a standard log-resolution of the pair $(X,\D)$.

Hence we get that $\mu(T_0)\subseteq \Ndlt(X,\D)$.
But the same holds for every component of $T$, so that we have
$$\mu(T)\subseteq \Ndlt(X,\D).$$

If $\Ndlt(X,\D)=\emptyset$, then $\mu(T)=\emptyset$, so that $T=\Nklt(X',\widehat{\D})=\emptyset$ and there is nothing to prove.
We can thus assume that $\Ndlt(X,\D)\not=\emptyset$.

Then, as by hypothesis $P_{|_{\Ndlt(X,\D)}}$ is semiample, we get that $P_{|_{\mu(T)}}$ is semiample.

Now we consider the commutative diagram:
$$\xymatrix{T \ar@{^{(}->}[r] \ar[d]^{\mu_{|_T}} & X' \ar[d]^{\mu}\\
\mu(T)\ar@{^{(}->}[r]&  X}$$

As $P_{|_{\mu(T)}}$ is semiample, we have that $\mu_{|_T}^*(P_{|_{\mu(T)}})$ is semiample, which
implies that $\mu^*(P)_{|_T}$ is semiample.


$ $

Now we claim that $\ulcorner -\Dc\urcorner= \ulcorner \Dc_-\urcorner-T$:
In fact
$$\ulcorner -\Dc\urcorner=\sum_{m \in M} \ulcorner b'_m \urcorner Z_m
+\sum_{k \in K} \ulcorner -c'_k\urcorner X_k+\sum_{l \in L} \ulcorner -a'_l \urcorner Y_l.$$
But, for all $l \in L$, we have that $0\geq -a'_l>-1$, so that $\ulcorner -a'_l \urcorner =0$.

Moreover for all $k \in K$, $0>-c'_k>-2$, so that
$$\ulcorner -c'_k\urcorner=\left\{ \begin{array}{ll} -1 & \textrm{if } c'_k\geq 1 \\ 0 & \textrm{if } c'_k<1 \end{array}\right.$$
Thus
$$\ulcorner -\Dc \urcorner=\sum_{m \in M} \ulcorner b'_m \urcorner Z_m-\sum_{c'_k\geq 1} X_k=\ulcorner \Dc_-\urcorner-T.$$
Take $k_1 \in \N$ such that $k_1>t_0$ and $k_1$ is a multiple of $k_0$, so that $k_1P$ is a Cartier divisor and
$$H^0(X', \mathcal{O}_{X'}(\mu^*(k_1mP)))\simeq H^0(X', \mathcal{O}_{X'}(\mu^*(k_1mP)+\ulcorner \widehat{\D}_-\urcorner))$$

for every $m \in \N$.
Let us consider, for every $k \in k_1\N$, the following commutative diagram:
$$\xymatrix{H^0(X', \mathcal{O}_{X'}(\mu^*(kP)+\ulcorner \Dc_-\urcorner)) \ar[r]^{\beta_k} &
H^0(T,\mathcal{O}_T(\mu^*(kP)_{|_{T}}+\ulcorner \Dc_-\urcorner_{|_{T}}))\\
H^0(X',\mathcal{O}_{X'}(\mu^*(kP))) \ar[r]^{\alpha_k} \ar[u]^{\simeq} & H^0(T,\mathcal{O}_T(\mu^*(kP)_{|_T})) \ar[u]^{i_k}
}$$

where the vertical arrow on the left is an isomorphism thanks to the choice of $k_1$.

Note that $i_k$ is injective for every $k \in k_1\N$ because $\ulcorner \Dc_-\urcorner_{|_{T}}$ is effective:

In fact $\ulcorner \Dc_-\urcorner$ is effective
and $\supp(\ulcorner \Dc_-\urcorner)=\supp (\Dc_-)=\cup Z_m$ does not contain any component of $T$.

Let us prove that $\beta_k$ is surjective for every $k\in k_1 \N$.
In particular we prove that $H^1(X',\mathcal{O}_{X'}(\mu^*(kP)+\ulcorner \Dc_-\urcorner-T))=0$:

Note that $\mu^*(kP)-(K_{X'}+\Dc)$ is ample, thanks to the choice of $k_1$,
and $\{\mu^*(kP)-(K_{X'}+\Dc)\}=\{-\Dc\}$ is SNCS.
Then, by Kawamata-Viehweg vanishing theorem (see \cite[Theorem 9.1.20]{Laz04}), we get that
$H^1(X',\mathcal{O}_{X'}(\mu^*(kP)+\ulcorner -\Dc \urcorner))=0$.

But $\ulcorner -\Dc\urcorner= \ulcorner \Dc_-\urcorner-T$. Then $H^1(X',\mathcal{O}_{X'}(\mu^*(kP)+\ulcorner \Dc_- \urcorner-T))=0$,
as required.

$ $

By the commutativity of the diagram, the surjectivity of $\beta_k$ implies that $i_k$ is surjective,
that is $i_k$ is an isomorphism.
Thus $\alpha_k$ is also surjective for every $k \in k_1 \N$.

But $\mu^*(P)_{|_T}$ is semiample, whence there exists $k_2 \in k_1 \N$ such that $\mu^*(k_2P)_{|_T}$ is base point free.

Then the surjectivity of $\alpha_{k_2}$ implies that $Bs(\mu^*(k_2P))\cap T=\emptyset$.
Therefore $\B(\mu^*(P))\cap \Nklt(X',\Dc)=\emptyset$.
\end{proof}

\begin{corollary}\label{overDLT}
Let $(X,\D)$ be a pair such that $\D$ is effective.

Let $D \in \Div_\Q(X)$ be such that
\begin{enumerate}
\item $D$ is big;
\item $aD-(K_X+\D)$ is nef for some $a \in \Q$;
\item There exists a projective birational morphism $f:Z\to X$ such that
$f^*(D)=P+N$ is a $\Q$-CKM Zariski decomposition and
\begin{itemize}
\item $(Z,\A(\D)_Z-aN)$ is an LC pair;
\item $\B_+(f^*(D))$ does not contain any LC center of the pair $(Z,\A(\D)_Z-aN)$;
\item $\Ndlt(Z,\A(\D)_Z-aN)=\emptyset$, or $P_{|_{\Ndlt(Z,\A(\D)_Z-aN)}}$ is semiample.
\end{itemize}
\end{enumerate}

Then $P$ is semiample. 
\end{corollary}

We remark that if $a\geq 0$ the LCness of the pair $(Z,\A(\D)_Z-aN)$ holds
if we suppose that $(X,\D)$ is an LC pair.

\begin{proof}
Let us apply Lemma \ref{allseason} and consider $t_0$, $D' $, $P'$, $N'$, $\D_Z$
as in the lemma, so that $t_0P'-(K_Z+\D_Z-N')$ is big and nef.


Note that $\B_+(P')=\B_+(P)=\B_+(f^*(D))$.
Hence we can apply Lemma \ref{giocoB_+} to the big and nef $\Q$-divisor $P'$ and to the pair $(Z,\D_Z-N')=(Z,\A(\D)_Z-aN)$
 and we find a Cartier divisor $\Gamma$
and a rational number $\la>0$ such that
$P'-\la\Gamma$ is ample, $(Z,\D_Z-N'+\la\Gamma)$ is LC and $\CLC(Z,\D_Z-N'+\la\Gamma)=\CLC(Z,\D_Z-N')$.

Furthermore, we can choose $\Gamma$ generically in its linear series and we have that $Bs(|\Gamma|)=\B_+(P')$.
Then, by Bertini's Theorem, we can suppose that, outside $\B_+(P')$, $\G$ is smooth  and it intersects $\D_Z-N'$ in a simple normal crossing way.

Let us put
$B:=\D_Z-N'+\la\Gamma$.
We will show that the pair $(Z,B)$ and the $\Q$-Cartier divisor $P'$ satisfy the hypotheses of Theorem \ref{estate}.

First of all we have that
$(t_0+1)P'-(K_Z+B)=(P'-\la\G)+ (t_0P'-(K_Z+\D_Z-N'))$
is ample, so that property (ii) holds.

By the LCness of the pair $(Z,B)$ we get that all the coefficients of $B$ are less than or equal to 1 and property (v) holds.
Moreover property (i) is trivially verified and property (iii) follows by the definition of $\Q$-CKM Zariski decomposition
because $\D_Z$ is effective.

$ $

In order to prove that property (iv) holds, it suffices to show that $\Ndlt(Z,B)\subseteq\Ndlt(Z,\D_Z-N')=\Ndlt(Z,\A(\D)_Z-aN)$:

By the choice of $\Gamma$ we have that $\CLC(Z,\D_Z-N')=\CLC(Z,B)$ and
$\NSNC(B)\subseteq \NSNC(\D_Z-N')\cup \B_+(P')$.

Then, if $V\in \CLC(Z,B)$ and $V \subseteq \Sing(Z) \cup \NSNC(B)$,
we get that $V \in \CLC(Z,\D_Z-N')$ and $V \subseteq \Sing(Z)\cup \NSNC(\D_Z-N')\cup \B_+(P')$.
This implies that $V\subseteq \Sing(Z)\cup \NSNC(\D_Z-N')$.
Hence $V \subseteq \Ndlt(Z,\D_Z-N')$, and we get the required inclusion.
Therefore we can apply Theorem \ref{estate}.
\end{proof}

\begin{theorem}\label{estate2}
Let $(X,\D)$ be an LC pair, with $\dim\; X\geq 2$.
Suppose that $P\in \Div_\Q(X)$ and we can write $\D=\D_+-\D_-$, where $\D_+$ and $\D_-$ are effective $\Q$-divisors,
and the following conditions are satisfied:

\begin{enumerate}
\item $P$ is nef;
\item $t_0P-(K_X+\D)$ is nef for some $t_0 \in \Q^+$;
\item There exists $k_0 \in \N$ such that $k_0P$ is a Cartier divisor and for all $m \in \N$ we have
$$H^0(X,\mathcal{O}_X(mk_0P))\simeq H^0(X,\mathcal{O}_X(mk_0P+\ulcorner \D_-\urcorner));$$
\item $\Nklt_2(X,\D)=\emptyset$, or $P_{|_{\Nklt_2(X,\D)}}$ is semiample.
\item $P$ is logbig in codimension 1 for the pair $(X,\D)$, or $t_0P-(K_X+\D)$
is logbig in codimension 1 for the pair $(X,\D)$.
\end{enumerate}

Then $P$ is semiample.
\end{theorem}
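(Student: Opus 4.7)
The plan is to extend the approach of theorem \ref{estate} by combining it with a dimension induction that treats divisorial LC centers via adjunction, using the logbig-in-codimension-1 hypothesis to feed the induction, and to arrange the key Kawamata--Viehweg vanishing on a log resolution using only big-and-nef positivity rather than ampleness of $t_0P-(K_X+\D)$.

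First, by hypothesis (5), I would arrange (after possibly replacing $t_0$ with $t_0+1$) that $L:=t_0P-(K_X+\D)$ is itself big, nef, and logbig in codimension~1 for $(X,\D)$. Indeed, if only $P$ is logbig in codimension~1, the divisor $(t_0+1)P-(K_X+\D)=L+P$ is big and nef, and its restriction to any codimension-1 LC center $V$ of $(X,\D)$ is $L|_V+P|_V$, a sum of a nef and a big divisor, hence big.

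Next I prove, by induction on $\dim X$, that $P|_{D_i}$ is semiample for every divisorial LC center $D_i$ of $(X,\D)$, i.e.\ every prime component of $\D^{=1}$. By adjunction $(D_i,\D_{D_i})$ is an LC pair with $K_{D_i}+\D_{D_i}=(K_X+\D)|_{D_i}$, and $L|_{D_i}=t_0P|_{D_i}-(K_{D_i}+\D_{D_i})$ is big, nef, and logbig in codimension~1 for $(D_i,\D_{D_i})$; hypothesis (4) on $Nklt_2$ restricts correctly by Koll\'ar's theory of LC centers. The inductive hypothesis of \ref{estate2} in dimension $\dim X-1$ then yields semiampleness of $P|_{D_i}$. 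The base case $\dim X=2$ is immediate since each $D_i$ is then a projective curve on which a big and nef divisor is ample. Combining with hypothesis (4), I obtain semiampleness of $P$ on the union $Nklt_2(X,\D)\cup\supp(\D^{=1})$ of all LC centers of $(X,\D)$.

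Finally I would re-run the proof of \ref{estate} on a standard log resolution $\mu:X'\to X$ of $(X,\D)$, constructing $\Dc$ as in that proof and replacing the ampleness of $\mu^*(mP)-(K_{X'}+\Dc)$ by nef-and-bigness, which is still enough for Kawamata--Viehweg vanishing and hence for the surjectivity of the restriction map used there. The crucial observation is that $T$, the $Nklt$ part of $\Dc$, satisfies $\mu(T)\subseteq Nklt_2(X,\D)\cup\supp(\D^{=1})$: $\mu$-exceptional components of $T$ have $\mu$-image of codimension at least~$2$ (hence inside $Nklt_2$), while non-exceptional components are strict transforms of divisorial LC centers. By the previous step $P$ is semiample on this union, so $\mu^*(P)|_T$ is semiample, and the remaining lifting-via-vanishing argument of \ref{estate} (together with the appeal to theorem 2.1 of \cite{Amb05}) goes through verbatim. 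The main obstacle is verifying that $\mu^*(mP)-(K_{X'}+\Dc)$ can indeed be made nef \emph{and} big on $X'$ using only nefness of $L$: this requires choosing the $\epsilon$ and $\delta_j$ perturbations carefully, using that $\mu^*(L)$ is big and nef and that small perturbations of a big nef divisor remain big, with the negative exceptional contributions absorbed into a Kodaira-type decomposition of $\mu^*(L)$.
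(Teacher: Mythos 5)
Your plan diverges substantially from the paper's proof, and it contains gaps that I do not see how to close.

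First, the dimension induction does not go through as stated. To invoke the inductive hypothesis of Theorem \ref{estate2} on the pair $(D_i,\D_{D_i})$, you would need hypothesis (5) to hold there, i.e.\ you would need $L|_{D_i}$ (or $P|_{D_i}$) to be logbig in codimension $1$ for $(D_i,\D_{D_i})$. The divisorial LC centers of $(D_i,\D_{D_i})$ are codimension-$2$ LC centers of $(X,\D)$, and hypothesis (5) of the theorem gives bigness of $L$ only on codimension-$1$ LC centers; there is no reason for $L$ to be big on codimension-$2$ LC centers. Similarly, hypothesis (3) is an $H^0$-isomorphism on $X$ and does not automatically restrict to $D_i$; you would have to argue separately for this, and no mechanism is offered. (There is also the usual caveat that a prime component of $\D^{=1}$ need not be normal for a merely LC pair, so even the adjunction $K_{D_i}+\D_{D_i}=(K_X+\D)|_{D_i}$ has to be handled with care.)

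Second, re-running the proof of Theorem \ref{estate} under only ``$t_0P-(K_X+\D)$ big and nef'' is the obstacle you flag, and it is real. That proof subtracts small multiples of $\mu$-exceptional divisors from $\mu^*(t_0P-(K_X+\D))$ and needs the result to stay \emph{ample}, which is what the composition-of-blowups structure of $\mu$ gives starting from an ample divisor; starting from a big and nef divisor this subtraction destroys nefness. The ``Kodaira-type decomposition'' you suggest would produce an effective part whose support you do not control, and this interferes with the simple-normal-crossing and discrepancy bookkeeping (the construction of $\widehat{\D}_\epsilon$, the bound $>-2$ on coefficients, the identification $Nklt(X',\widehat{\D})=\supp(\widehat{\D}^{\geq1})$). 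The paper sidesteps this entirely: it chooses $\G$ a general member of $|m_0(L-H)|$, where $H$ is ample and $\B_+(L)=Bs(|m_0(L-H)|)$, and replaces $\D$ by $\D_\la=\D+\la\G$. Because $\B_+(L)$ avoids all divisorial LC centers (this is exactly what logbigness in codimension $1$ buys), $\supp(\G)$ can be arranged to avoid them too, so the divisorial LC centers and their discrepancies are unchanged; meanwhile $(t_0+1)P-(K_X+\D_\la)$ becomes genuinely \emph{ample}, $a(E,X,\D_\la)>-2$ for a standard log-resolution, and $\Nklt(X,\D_\la)\subseteq Nklt_2(X,\D)$. Hypotheses of Theorem \ref{estate} then hold verbatim for $(X,\D_\la)$, with $(\D_\la)_-=\D_-$, so one applies \ref{estate} directly and there is no need to generalize it, nor to induct, nor to establish semiampleness on divisorial LC centers.
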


\begin{proof}
Let $$L=\left\{ \begin{array}{ll} P & \textrm{if } P \textrm{ is logbig in codimension 1 for the pair }(X,\D) \\ t_0P-(K_X+\D) & \textrm{otherwise}\end{array}\right.$$
Then $L$ is nef and logbig in codimension 1 for the pair $(X,\D)$, so that
$\B_+(L)$ does not contain any divisorial LC center of the pair $(X,\D)$, thanks to Lemma \ref{B+divisori}.

By \cite[Prop. 1.5]{ELMNP06} there exists $H\in \Div_\Q(X)$ ample and there exists $m_0 \in \N$ such that
$$\B_+(L)=\B(L-H)=Bs(|m_0(L-H)|).$$
Hence, we can choose a general divisor $\Gamma$ in $|m_0(L-H)|$ such that $\supp(\Gamma)$ does not contain any divisorial LC
center of $(X,\D)$.
Note that we have
$$L-\la\G \sim_\Q (1-\la m_0)L+\la m_0H$$
is ample if $\la\in (0, \frac{1}{m_0}]$
 because $L$ is nef and $H$ is ample.

Now, for every $\la\in (0, \frac{1}{m_0}]$, let us define $\D_\la=\D+\la\Gamma$.
We will prove that there exists $\la_0 \in \Q^+$
such that, if $\la\in \Q\cap (0,\la_0)$, then $P$ and the pair $(X,\D_\la)$ satisfy the hypotheses of Theorem \ref{estate}.
First of all note that
$$(t_0+1)P-(K_X+\D_\la)=(t_0+1)P-(K_X+\D+\la\G)=P+(t_0P-(K_X+\D))-\la\Gamma=$$
$$=\left\{ \begin{array}{ll} L-\la\G+(t_0P-(K_X+\D)) & \textrm{if } P \textrm{ is logbig in codimension } 1 \\
P+(L-\la\G) & \textrm{otherwise}\end{array}\right.$$
is ample in both cases for every $\la \in (0, \frac{1}{m_0}]$.
Now let us define $$(\D_\la)_+:=\D_++\la\Gamma; \quad (\D_\la)_-:=\D_-.$$
Then $\D_\la=(\D_\la)_+-(\D_\la)_-$, and
$(\D_\la)_+$ and $(\D_\la)_-$ are effective $\Q$-divisors for every $\la>0$, because $\Gamma$ is effective.
Moreover note that, with these definitions, hypotheses (i) and (iii) of Theorem \ref{estate} are trivially verified.

$ $

Now take a rational number $\la'>0$ such that $\supp(\D)\cup\supp(\Gamma)=\supp(\D+\la\G)$ for every $\la\in(0,\la')$.
and let $\mu:X'\rightarrow X$ be a standard log-resolution of the pair $(X,\D+\la\Gamma)$.
For every prime divisor $E\subseteq X'$ we have that
$$a(E,X,\D_\la)=a(E,X,\D+\la\G)=a(E,X,\D)-\la \mbox{ord}_E(\mu^*(\Gamma)),$$
where $a(E,X,\D)\geq -1$, because $(X,\D)$ is an LC pair.

Suppose that $E$ is a prime divisor on $X'$ such that $E$ is not $\mu$-exceptional and $a(E,X,\D)=-1$.

Then $\mu(E)$ is a divisorial LC center of $(X,\D)$, so that
$\mbox{ord}_{\mu(E)} \Gamma=0$, that is $\ord_E (\mu^*(\G))=0$, which implies
$a(E,X,\D_\la)=-1$.

Now define
$$\la_1:=\min_{\substack{ord_E (\mu^*(\G))>0 \\ a(E,X,\D)>-1}}\Big{\{} \frac{1+a(E,X,\D)}{\ord_E(\mu^*(\G))},1\Big{\}}.$$
Then $\la_1>0$ and, if $\la \in \Q \cap (0,\la_1)$, we have that
$a(E,X,\D_\la)>-1$ for every prime divisor $E\subseteq X'$ such that $a(E,X,\D)>-1$.

Define $$\la_2:=\min_{\substack{ord_E (\mu^*(\G))>0 \\ a(E,X,\D)=-1}}\Big{\{} \frac{2+a(E,X,\D)}{\ord_E(\mu^*(\G))},1\Big{\}}.$$

Then $\la_2>0$ and, if $\la \in \Q \cap (0,\la_2)$, we have that
$a(E,X,\D_\la)>-2$ for every prime divisor $E\subseteq X'$ such that $a(E,X,\D)=-1$.

We put $\la_0:=\min\{\la',\la_1,\la_2,\frac{1}{m_0}\}$, so that if $\la \in \Q \cap(0,\la_0)$ then $(X,\D_\la)$ satisfies hypothesis (v) of Theorem \ref{estate}.

Furthermore we can write
$$\D_\la=\sum -a(\mu_*^{-1} B_i,X,\D_\la) B_i,$$

where the $B_i$'s are distinct prime divisors on $X$.
By definition, for every $i$, $\mu_*^{-1} B_i$ is not an exceptional divisors, so that, it follows by the previous calculation
that $-a(\mu_*^{-1} B_i,X,\D_\la)\leq 1$.

$ $


Now let us consider
$$\A(\D)_{X'}=\sum_{E\subseteq X'} -a(E,X,\D) E\mbox{;} \quad \A(\D_\la)_{X'}=\sum_{E\subseteq X'} -a(E,X,\D_\la) E.$$

Thanks to the choice of $\mu$ and $\la$ we have that they both have SNCS.
Let us put $$F:=\sum_{a(E,X,\D_\la)<-1} (-a(E,X,\D_\la)-1)E;$$
$$\widetilde{\D}:=\A(\D_\la)_{X'}-F=\sum_{a(E,X,\D_\la)\geq -1} -a(E,X,\D_\la)E +\sum_{a(E,X,\D_\la)<-1} E.$$

Then we have that $F$ is effective, $\supp(\widetilde{\D})\subseteq \supp(\A(\D_\la)_{X'})$ and all the coefficients of $\widetilde{\D}$ are less than or equal
to 1.
In particular the pair $(X',\widetilde{\D})$ is LC.

Moreover, by the previous calculations, we have that $F$ is exceptional, $\supp(F)\subseteq \supp ((\A(\D)_{X'})^{=1})$
and $\widetilde{\D}^{=1}=(\A(\D)_{X'})^{=1}$.

$ $

Let us show that
$\Nklt_2(X,\D_\la)\subseteq \Nklt_2(X,\D)$:

Let $V$ be an LC center of the pair $(X,\D_\la)$ of codimension greater than one.
Then $V=\mu(W)$ for some $W \in \CLC(X',\A(\D_\la)_{X'})$.

If $W \not \subseteq \supp(F)$,  then $W \in \CLC(X',\widetilde{\D})$,
whence $W$ is an irreducible
component of a finite intersection of prime divisors in the support of $\widetilde{\D}^{=1}=(\A(\D)_{X'})^{=1}$.

Hence $W \in \CLC(X',\A(\D)_{X'})$, which implies that $V=\mu(W)\in \CLC(X,\D)$, so that $V\subseteq \Nklt_2(X,\D)$, because the codimension of $V$ is
greater than 1.

If $W \subseteq \supp(F)$ then there exists a prime divisor $F_0\subseteq \supp(F)$
such that $W\subseteq F_0$.

Then  $F_0 \subseteq \supp(F)\subseteq \supp((\A(\D)_{X'})^{=1})$.
Hence $F_0 \in \CLC(X',\A(\D)_{X'})$, so that $\mu(F_0)\in \CLC(X,\D)$.
Moreover $\codim\; \mu(F_0)\geq  2$, because $F_0$ is exceptional.
Thus
$$V=\mu(W)\subseteq \mu(F_0)\subseteq \Nklt_2(X,\D).$$
This shows that $\Ndlt(X,\D_\la)\subseteq \Nklt_2(X,\D_\la)\subseteq \Nklt_2(X,\D)$, which implies, by the hypotheses,
that $\Ndlt(X,\D_\la)=\emptyset$ or $P_{|_{\Ndlt(X,\D_\la)}}$ is semiample.

Therefore all the hypotheses of Theorem \ref{estate} are satisfied and we get the semiampleness of $P$.
\end{proof}

\begin{corollary}\label{Nklt2}
Let $(X,\D)$ be a pair  with $\D$ effective and $\dim\; X\geq 2$ and let $a \in \Q$.

Let $D \in \Div_\Q(X)$ be such that:
\begin{enumerate}
\item $D$ is big
\item $aD-(K_X+\D)$ is nef;
\item There exists a projective birational morphism $f:Z \to X$ such that
$$f^*(D)=P+N$$ is a $\Q$-CKM Zariski decomposition and
\begin{itemize}
\item $(Z,\A(\D)_Z-aN)$ is an LC pair;
\item $\B_+(f^*(D))$ does not contain divisorial LC centers of the pair $(Z,\A(\D)_Z-aN)$;
\item $\Nklt_2(Z,\A(\D)_Z-aN)=\emptyset$, or $P_{|_{\Nklt_2(Z,\A(\D)_Z-aN)}}$ is semiample;
\end{itemize}

\end{enumerate}

Then $P$ is semiample. 
\end{corollary}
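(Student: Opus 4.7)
The plan is to reduce, via Lemma \ref{allseason}, to an application of Theorem \ref{estate2} to the divisor $P'$ on $Z$ and the pair $(Z,B)$, where $B := \A(\D)_Z-aN=\D_Z-N'$. First I would apply Lemma \ref{allseason} to produce $t_0\in\Q$ and Weil $\Q$-divisors $D',P',N',\D_Z$ such that $D'=P'+N'$ is a $\Q$-CKM Zariski decomposition, $P'=bP$ for some $b>0$, $\D_Z-N'=\A(\D)_Z-aN$, and $t_0P'-(K_Z+B)$ is big and nef. By hypothesis $(Z,B)$ is LC and $\dim Z=\dim X\geq 2$, so we are in the setting of Theorem \ref{estate2}.

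Next I would verify the five hypotheses of Theorem \ref{estate2} for $(Z,B)$ and $P'$. Nefness of $P'$ and of $t_0P'-(K_Z+B)$ is immediate from the previous step. For the section-matching condition, I would write the Weil decomposition $B=B_+-B_-$ with $B_+,B_-$ effective and without common components; since $\D_Z$ is effective and $B=\D_Z-N'$, we get $B_-\leq N'$. Then I would choose $k_0\in\N$ from the $\Q$-CKM Zariski decomposition of $D'=P'+N'$ such that $k_0P'$ is Cartier, $k_0D'$ is integral, $\ulcorner B_-\urcorner\leq k_0N'$, and $H^0(Z,mk_0P')\simeq H^0(Z,mk_0D')$ for every $m\in\N$. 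The chain of inclusions
\[
H^0(Z,mk_0P')\hookrightarrow H^0(Z,mk_0P'+\ulcorner B_-\urcorner)\hookrightarrow H^0(Z,mk_0P'+mk_0N')=H^0(Z,mk_0D')
\]
together with the equality of the outer terms forces the first inclusion to be an isomorphism, as required by hypothesis (3) of Theorem \ref{estate2}.

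The remaining hypotheses of \ref{estate2} transfer directly from those of the corollary. The assumption on $Nklt_2(Z,\A(\D)_Z-aN)$ gives hypothesis (4) of \ref{estate2} for $(Z,B)$ and $P'$, since $P'=bP$ and semiampleness is invariant under positive rational multiples. For hypothesis (5), we have $\B_+(P')=\B_+(P)=\B_+(f^*(D))$, and recalling that a prime divisor $E$ is contained in $\B_+(P')$ if and only if $P'|_E$ fails to be big (see \cite[10.3.6]{Laz04}), the assumption that $\B_+(f^*(D))$ contains no divisorial LC center of $(Z,B)$ is exactly the assertion that $P'$ is logbig in codimension $1$ for $(Z,B)$.

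Therefore Theorem \ref{estate2} applies and yields semiampleness of $P'$, which is equivalent to semiampleness of $P$. The only subtle point is the section-matching condition for $B_-$ in place of $N'$; this is handled by the squeeze argument above, using the effectivity of $\D_Z$ to control $B_-$ by $N'$ and then invoking the CKM isomorphism for $D'=P'+N'$.
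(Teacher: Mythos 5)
Your proof is correct and takes the same route as the paper: apply Lemma \ref{allseason} to reduce to Theorem \ref{estate2} on $Z$ with the pair $(Z,\A(\D)_Z-aN)$ and the nef divisor $P'$. The paper's proof is a single sentence to this effect; your verification of the five hypotheses of Theorem \ref{estate2} --- in particular the squeeze argument comparing $\ulcorner B_-\urcorner$ against $k_0N'$ to get the section-matching condition, and the translation of the $\B_+$-hypothesis into logbigness in codimension $1$ via \cite[10.3.6]{Laz04} --- is exactly what is implicitly being invoked.
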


Note that in the case $a\geq 0$ we can just assume that the pair $(X,\D)$
is LC in order to have the LCness of the pair $(Z,\A(\D)_Z-aN)$.

\begin{proof}
%
Thanks to Lemma \ref{allseason}  the corollary follows by applying Theorem \ref{estate2} to the $\Q$-Cartier divisor $P'$ and to the pair
$(Z,\A(\D)_Z-aN)$.
\end{proof}

\begin{corollary}\label{dim3}
Let $(X,\D)$ be a pair such that $\D$ is effective and $\dim\; X\leq 3$,
let $a \in \Q$.
Let $D \in \Div_\Q(X)$ be such that
\begin{enumerate}
\item $D$ is big;
\item $aD-(K_X+\D)$ is nef;
\item There exists a projective birational morphism $f:Z \to X$ such that $f^*(D)=P+N$ is a $\Q$-CKM Zariski decomposition and
\begin{itemize}
\item $(X,\D)$ is an LC pair and $a\geq 0$ (resp. $(Z,\A(\D)_Z-aN)$ is an LC pair);
\item $P$ is logbig for the pair $(Z,\A(\D)_Z)$ (resp. $P$ is logbig for the
 pair $(Z,\A(\D)_Z-aN)$).
\end{itemize}
\end{enumerate}

Then $P$ is semiample. Hence $R(X,D)$ is finitely generated.
\end{corollary}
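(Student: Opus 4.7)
The plan is to derive both statements by applying Corollary~\ref{Nklt2} after verifying its hypotheses in each case. When $\dim X\leq 1$ there is nothing to prove: $f$ is then necessarily an isomorphism, and the positive part of a $\Q$-Zariski decomposition of a big divisor on a curve has positive degree (because $h^0(kP)=h^0(kD)$ grows linearly in $k$), hence $P$ is ample. So I focus on the case $\dim X\in\{2,3\}$.

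The first step is to reconcile the two ``resp.'' alternatives so that in both cases one ends up with an LC pair $(Z,\A(\D)_Z-aN)$ for which $P$ is logbig. The second alternative gives this directly. In the first, where $(X,\D)$ is LC and $a\geq 0$, the remark following Corollary~\ref{Nklt2} shows that $(Z,\A(\D)_Z-aN)$ is LC; moreover, since $aN$ is effective, $\A(\D)_Z-aN\leq \A(\D)_Z$, so every LC center of $(Z,\A(\D)_Z-aN)$ is also an LC center of $(Z,\A(\D)_Z)$, and the assumed logbigness of $P$ transfers without modification.

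Once this reduction is done, the two remaining hypotheses of Corollary~\ref{Nklt2} fall out quickly. For any divisorial LC center $V$ of $(Z,\A(\D)_Z-aN)$, logbigness of $P$ gives $P_{|V}$ big, so $V\not\subseteq\B_+(P)$ by \cite[10.3.6]{Laz04}; and from the definition of a $\Q$-CKM Zariski decomposition one has $\B_+(P)=\B_+(f^*(D))$, exactly as used in the proof of Theorem~\ref{Q-Gorenstein}. For the $Nklt_2$ condition the dimension hypothesis is decisive: since $\dim Z\leq 3$, every LC center appearing in $Nklt_2(Z,\A(\D)_Z-aN)$ has codimension at least two, hence is an irreducible subvariety of dimension $0$ or $1$. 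On a point $P_{|V}$ is trivially semiample; on an irreducible projective curve $P_{|V}$ is nef and big (by logbigness), hence of positive degree, therefore ample and in particular semiample. Assembling these on the finitely many LC centers forming $Nklt_2(Z,\A(\D)_Z-aN)$ yields that $P_{|Nklt_2(Z,\A(\D)_Z-aN)}$ is semiample, and Corollary~\ref{Nklt2} concludes. The only step that is not purely formal is this last dimension-count, which is exactly what forces the restriction to $\dim X\leq 3$: in higher dimension $Nklt_2$ could carry positive-dimensional LC centers on which semiampleness of $P$ is not automatic and would presumably have to be established by induction.
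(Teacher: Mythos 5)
Your proof is correct and follows essentially the same route as the paper's own: reduce both ``resp.'' alternatives to a single LC pair $(Z,\A(\D)_Z-aN)$ for which $P$ is logbig, invoke Corollary~\ref{Nklt2}, handle the divisorial LC centers via \cite[10.3.6]{Laz04} together with $\B_+(P)=\B_+(f^*(D))$, and use the dimension bound $\dim Z\leq 3$ to see that $Nklt_2$ consists of points and curves on which logbigness forces $P$ to be ample.
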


\begin{proof}
Begin by noting that if $\dim\; X\leq 1$ then the theorem is trivial because every big divisor on a curve is ample.
We can thus assume that $2\leq \dim\; X \leq 3$.

Note also that if $a\geq 0$ and $(X,\D)$ is LC then $(Z,\A(\D)_Z-aN)$ is LC and
$\CLC(Z,\A(\D)_Z-aN)\subseteq \CLC(Z,\A(\D)_Z)$.
Thus we can assume that $P$ is logbig for the LC pair $(Z,\A(\D)_Z-aN)$.

Hence, by Lemma \ref{B+divisori} we get that $\B_+(P)$ does not contain divisorial LC centers of the pair $(Z,\A(\D)_Z-aN)$,
so that the same holds for $\B_+(f^*(D))$.

Then, in order to apply Corollary \ref{Nklt2}, it just remains to show that $P_{|_{\Nklt_2(Z,\A(\D)_Z-aN)}}$ is semiample
if $\Nklt_2(Z,\A(\D)_Z-aN)\not=\emptyset$.


Let $C$ be a connected component of $\Nklt_2(Z,\A(\D)_Z-aN)$. Then, by hypothesis, we have that $0\leq \dim\; C\leq 1$.

If $\dim\; C=0$ then $P_{|_C}$ is trivially semiample.
If $\dim\; C=1$ then we can write $C =\cup_{j=1}^k C_j$, where the $C_j$'s are irreducible curves.

Then we have that $C_j \in \CLC(Z,\A(\D)_Z-aN)$ for every $j\in\{1,\dots,k\}$, so that $P_{|_{C_j}}$ is big, because $P$ is logbig for the pair
$(Z,\A(\D)_Z-aN)$.

But, as $C_j$ is an irreducible curve, this implies that $P_{|_{C_j}}$ is ample for every $j=\{1,\dots,k\}$.
Hence $P_{|_{C}}$ is ample,
so that in particular it is semiample.
\end{proof}

\section{Some consequences of Ambro's theorem}

In this section, as a corollary the main theorem in \cite{Amb05}, we obtain some variants of the results of the previous sections.
Note that we add a strong hypothesis concerning the stable base locus of the positive part of the given Zariski decomposition, but we work on pairs that are not necessarily LC.


\begin{theorem}\label{Ambro}
Let $X$ be a normal projective variety and let $\D$ be an effective Weil $\Q$-divisor.
If $D$ is a Weil $\Q$-divisor such that
\begin{enumerate}
\item There exists a $\Q$-CKM Zariski decomposition
$$D=P+N;$$
\item There exist two rational numbers $a$ and $t_0$, with $a\geq 0$, such that $\Nklt(X,\D-aN)\cap \B(P)=\emptyset$ and
$$t_0P-(K_X+\D-aN)$$ is big and nef,
\end{enumerate}

then $P$ is semiample.
\end{theorem}

\begin{proof}
Let $B=\D-aN$ and let $B_-=aN$.
Then $B+B_-=\D\geq 0$
and $t_0P-(K_X+B)$ is big and nef.

Moreover, by definition of $\Q$-CKM Zariski decomposition, there exists $k_0 \in \N$ such that $k_0>a$, $k_0P$ is a Cartier divisor,
$k_0D$ is integral and

$$H^0(X, \mathcal{O}_X( mk_0P))\simeq H^0(X, \mathcal{O}_X( mk_0D))$$

for all $m \in \N$.
But $\ulcorner B_- \urcorner =\ulcorner aN \urcorner \leq k_0N$.
Hence, for all $m \in \N$, we get that
$$H^0(X, \mathcal{O}_X( mk_0P))\simeq H^0(X, \mathcal{O}_X( mk_0P+\ulcorner B_- \urcorner)).$$

Thus we can apply \cite[Theorem 2.1]{Amb05} and we get the semiampleness of $P$.
\end{proof}

\begin{corollary}\label{ambrocoro}
Let $(X,\D)$ be a pair with $\D$ effective and let $D\in \Div_\Q(X)$.
If
\begin{enumerate}
\item $D$ is big;
\item $aD-(K_X+\D)$ is nef for some $a \in \Q$;
\item There exists a projective birational morphism $f:Z\to X$ such that
$f^*(D)$ admits a $\Q$-CKM Zariski decomposition $$f^*(D)=P+N$$ and
$\Nklt(Z,\A(\D)_Z-aN)\cap \B(P)=\emptyset$;
\end{enumerate}

then $P$ is semiample.
\end{corollary}

\begin{proof}

Let us apply Lemma \ref{allseason},
and consider $t_0\in \Q$ and $D'$, $P'$, $N'$, $\D_Z$ Weil $\Q$-divisors on $Z$
as in the lemma.
Then $t_0P'-(K_Z+\D_Z-N')$ is big and nef and $\Nklt(Z,\D_Z-N')\cap \B(P')=\emptyset$.

Thus we can apply Theorem \ref{Ambro} and we are done.
\end{proof}

\begin{remark}\begin{em}\label{ambrocoro_bpf}
In Corollary  \ref{ambrocoro},
instead of requiring that $D$ is big and $aD-(K_X+\D)$ is nef, 
we can consider the condition that $aD-(K_X+\D)$ is big and nef for some $a \in \Q$.
In fact this is sufficient to apply Lemma \ref{allseason}.
\end{em}\end{remark}

If $X$ is $\Q$-Gorenstein, by using Theorem \ref{Ambro}, we get the following:

\begin{theorem}\label{Q-Gorenstein}
Let $(X,\D)$ be an LC pair such that $X$ is $\Q$-Gorenstein and $\D$ is effective.

Let $D \in \Div_\Q(X)$ be such that
\begin{enumerate}
\item $D$ is big;
\item $\B_+(D)\not\supseteq V$, for every $V \in \CLC(X,\D)$;
\item $D$ has a $\Q$-CKM Zariski decomposition $$D=P+N$$ with
 $\B(P)\cap V=\emptyset$ for every $V \in \CLC(X,\D)$ such that $V \not\subseteq\supp(\D)$;
\end{enumerate}

then there exists $\beta>0$ such that if
$$aD-(K_X+\D) \mbox{ is nef for some rational number } a>-\beta,$$

then $P$ is semiample.
\end{theorem}

\begin{proof}
Note that $P$ is big because $D$ is such and it is easy to see that $\B_+(P)=\B_+(D)$ and $\supp(N)\subseteq \B_+(D)$.
Then, thanks to Lemma \ref{giocoB_+}, we can find an effective Cartier divisor $\Gamma$ and a rational number $\la>0$ such that
$P-\la\Gamma$ is ample,  the pair $(X,\D+\la\Gamma)$ is LC and $\CLC(X,\D)=\CLC(X,\D+\la\Gamma)$.

Now, as $\supp(N)$ does not contain any LC center of the pair $(X,\D+\la\G)$, there exists $\beta\in \Q^+$ such that, if $0\leq\beta'<\beta$, then
the pair $(X,\D+\la\Gamma +\beta' N)$ is LC and $\CLC(X,\D+\la\Gamma +\beta' N)=\CLC(X,\D)$.

Suppose $a>-\beta$ is a rational number such that $aD-(K_X+\D)$ is nef.

Define $a':=-\min\{0,a\}$, $a'':=\max\{0,a\}$, so that $a=a''-a'$, $a''\geq 0$, $0\leq a'<\beta$.
Moreover we define $\D'=\D+\la\Gamma +a' N$, so that $\D'$ is effective, $(X,\D')$ is LC
and $\CLC(X,\D')=\CLC(X,\D)$.
Hence,  we get that for every $\varepsilon \in \Q^+$
$$\CLC(X,\D'-\varepsilon\D-a''N)\subseteq \CLC(X,\D'-\varepsilon \D)=$$
$$=\{V \in \CLC(X,\D) \mbox{ such that } V \not\subseteq \supp(\D)\},$$

so that, by hypothesis, $\B(P)$ does not intersect any LC center of the pair $(X,\D'-\varepsilon D-a''N)$.
Moreover
$$(1+a)P+a''N-(K_X+\D'-\varepsilon\D)=(1+a)P+a''N-(K_X+\D+\la\Gamma+a'N-\varepsilon\D)=$$
$$= (P-\la \Gamma)+ (aD-(K_X+\D))+\varepsilon\D$$

is ample if $\varepsilon$ is sufficiently small, thanks to the openness of the ample cone.

Thus we obtain the semiampleness of $P$ by applying Theorem \ref{Ambro} to the pair $(X,\D'-\varepsilon\D)$.
\end{proof}

\section{Alternative hypotheses}

In this section we state a version of Corollary \ref{overDLT} with more classical ``basepoint-free type" hypotheses
and we show that the proof is very similar.
Note that the same variation can be stated for Theorem \ref{Q-Gorenstein} and Corollary \ref{Nklt2}.

Moreover these ``basepoint-free type" hypotheses already appeared in Remark \ref{ambrocoro_bpf} and in Corollary \ref{DLT_bpf}.

\begin{corollary}
Let $(X,\D)$ be a pair such that $\D$ is effective.

Let $D \in \Div_\Q(X)$ be such that
\begin{enumerate}
\item $aD-(K_X+\D)$ is big and nef for some $a \in \Q$;
\item there exists a projective birational morphism $f:Z \to X$ such that $f^*(D)=P+N$ is a $\Q$-CKM Zariski decomposition and
\begin{itemize}
\item $(Z,\A(\D)_Z-aN)$ is an LC pair;
\item $\B_+(f^*(aD-(K_X+\D)))$ does not contain any LC center of the pair $(Z,\A(\D)_Z-aN)$;
\item $\Ndlt(Z,\A(\D)_Z-aN)=\emptyset$, or $P_{|_{\Ndlt(Z,\A(\D)_Z-aN)}}$ is semiample.
\end{itemize}
\end{enumerate}

Then $P$ is semiample.
\end{corollary}

\begin{proof}
Define $L:=f^*(aD-(K_X+\D))$.
Then we can apply Lemma \ref{giocoB_+} to the big and nef $\Q$-divisor $L$ and to the pair $(Z,\A(\D)_Z-aN)$ and we find a Cartier divisor $\Gamma$
and a rational number $\la>0$ such that
$L-\la\Gamma$ is ample, $(Z,\A(\D)_Z-aN+\la\Gamma)$ is LC and $\CLC(Z,\A(\D)_Z-aN+\la\Gamma)=\CLC(Z,\A(\D)_Z-aN)$.

Furthermore, we can choose $\Gamma$ generically in its linear series and we have that $Bs(|\Gamma|)=\B_+(L)$.
Then, by Bertini's Theorem, we can suppose that, outside $\B_+(L)$, $\G$ is smooth  and it intersects $\A(\D)_Z-aN$ in a simple normal crossing way.

Now we apply Lemma \ref{allseason} and we consider $t_0$, $D'$, $P'$, $N'$, $\D_Z$ as in the lemma.
Then $t_0P'-(K_Z+\D_Z+\la \G-N')=P+L-\la\G$
is ample.

We conclude by applying Theorem \ref{estate} to the pair $(Z,\D_Z+\la\G-N')=(Z,\A(\D)_Z-aN+\la\Gamma)$
and the $\Q$-Cartier divisor $P'$:

In fact and we can argue as in the proof of Corollary \ref{overDLT} to show that all the hypotheses of the theorem are verified.
\end{proof}
\section{Examples}\label{examples}
\subsection{Basic construction}
The following general construction is due to Hacon and McKernan (see \cite[Theorem A.6]{Laz09}).
The choice of the surface $S$ is due to Gongyo (see \cite[Example 5.2]{Gon12}).

$ $

Let $S$ be the surface obtained by blowing up $\mathbb{P}^2$ in $9$ very general points, so that $-K_S$ is nef but not semiample.
Let $S\subseteq \mathbb{P}^N$ be a projectively normal embedding.

Let $X_0$ be the cone over $S$ and let $\phi:X\rightarrow X_0$ be the blowing-up at the vertex.

We have that $X\simeq \mathbb{P}_S(\mathcal{O}_S\oplus \mathcal{O}_S(-H))$, where $H$ is a sufficiently ample divisor on $S$.
Now we denote by $\pi:X\rightarrow S$ the natural projection, and by $E$ the $\phi$-exceptional divisor,
so that $E\simeq S$.

Note that $-(K_X+E)$ is big and nef and $(X,E)$ is a PLT pair as $\CLC(X,E)=\{E\}$.

Moreover, by adjunction,
$-(K_X+E)_{|_E}=-K_E$,
whence $-(K_X+E)$ is not semiample because $-K_S$ is not semiample.

\subsection{Applications}

In Example \ref{es.1}
we will show that, with the notation of the previous subsection, $E\subseteq \B_+(-(K_X+E))$, but $E\not\subseteq \B(-(K_X+E))$.

Then we have that $(X,E)$ is a PLT (hence DLT) pair such that
\begin{enumerate}
\item $-(K_X+E)$ is big and nef;
\item $\B(-(K_X+E))$ does not contain the only LC center of the pair $(X,E)$;
\item $-(K_X+E)$ is not semiample, so that $R(X,-(K_X+E))$ is not finitely generated.
\end{enumerate}

$ $

In  Example \ref{es.2} we will construct, for every $k \in \N$, a $\Q$-divisor $P$ and a $\Q$-divisor $\D$ on $X$ such that
$(X,\D)$ is DLT and the following conditions are satisfied:

\begin{enumerate}
\item $P$ is big and nef;
\item $P-(K_X+\D)$ is big and nef;
\item The pair $(X,\D)$ has  $m\geq k$ LC centers and just one of these
is contained in $\B_+(P)$;
\item $P$ is not semiample, so that $R(X,P)$ is not finitely generated.
\end{enumerate}

Note that property (iii) implies
 that there is one LC center of $(X,\D)$, say $V$,
 such that $P$ remains big when restricted to every LC center in $\CLC(X,\D)\setminus\{V\}$.

$ $

These examples show that in Conjecture 1 and many of our theorems, e.g. Theorem \ref{DLT_BCHM}, Corollary \ref{overDLT} and Theorem \ref{Q-Gorenstein},
we cannot lighten the hypothesis on the $\B_+$, meaning that we cannot replace it with the same hypothesis on the stable base locus and
we must take into account \textit{all} the LC centers.

Similarly we cannot sharpen the hypothesis of logbigness of $P$ in Conjecture 2
as well as in Theorem \ref{DLT_logbig}, in Theorem \ref{estate2}
and in Corollary \ref{dim3}.

\begin{example}\begin{em}\label{es.1}
Note that $E \subseteq \B_+(-(K_X+E))$ because by Nakamaye's theorem we have that
$$\B_+(-(K_X+E))=\Null(-(K_X+E))$$
and $(-(K_X+E)^2\cdot E)=(-(K_X+E)_{|_E})^2=0$.

$ $

On the other hand we have that $E\not \subseteq \B(-(K_X+E))$:

In fact
$$h^0(E, -(K_X+E)_{|_E})=h^0(\mathbb{P}^2,\mathcal{I}_{\{p_1,...p_9\}}(3))\not=0.$$
Thus the surjectivity of the restriction map
$$H^0(X,-(K_X+E))\rightarrow H^0(E,-(K_X+E)_{|_E})\not=0,$$

given by  Kawamata-Viehweg vanishing theorem (\cite[Theorem 4.3.1]{Laz04}),
implies that $E\not\subseteq Bs(|-(K_X+E)|)$, so that in particular $E\not\subseteq \B(-(K_X+\nolinebreak E))$.
\end{em}\end{example}

\begin{example}\begin{em}\label{es.2}
Let $A_1,\dots, A_k$ be smooth hyperplane sections on $X_0$ such that $v \not\in A_i$ for every $i=1,\dots,k$ and
the ample divisor $A:=\sum A_i$ is SNC.
Let
$$P:=-(K_X+E)+\phi^*(A).$$
Moreover define $\D:=E+\phi^*(A)=E+\phi_*^{-1}(A)$.

Note that the pair $(X,\D)$ is DLT, because $X$ is smooth and $E+\phi_*^{-1}(A)$ is a SNC divisor,
and the LC centers of $(X,\D)$ are exactly the irreducible components of finite intersections of prime divisors
in the support of $\D$, namely $E$ and $\phi^*(A_i)$ for every $i \in\{1,\dots k\}$.
Note also that $P$ and $P-(K_X+\D)$ are big and nef.

Now we know that there exists $\varepsilon>0$ such that $\phi^*(A)-\varepsilon E$ is ample.
Then we can write $$P=\big{(}-(K_X+E)+\phi^*(A)-\varepsilon E\big{)} +\varepsilon E,$$

where $-(K_X+E)+\phi^*(A)-\varepsilon E$ is ample.

This implies that $\B_+(P)\subseteq E$.
On the other hand $\phi^*(A)\cap E=\emptyset$, so that the only LC center of the pair $(X,\D)$ contained in $\B_+(P)$ is $E$.

Moreover, as $\phi^*(A)_{|_E}=0$,
we have that $P_{|_E}=-(K_X+E)_{|_E}=-K_E$ is not semiample, because $E\simeq S$.
Therefore $P$ is not semiample.
\end{em}\end{example}


\end{document}